\newtheorem{theorem}{Theorem}
\theoremstyle{plain}
\newtheorem{acknowledgement}{Acknowledgement}
\newtheorem{axiom}{Axiom}
\newtheorem{conjecture}{Conjecture}
\newtheorem{corollary}{Corollary}
\newtheorem{definition}{Definition}
\newtheorem{example}{Example}
\newtheorem{exercise}{Exercise}
\newtheorem{lemma}{Lemma}
\newtheorem{proposition}{Proposition}
\newtheorem{remark}{Remark}
\numberwithin{equation}{section}
\chardef\@x10\chardef\@xv60
\def\tcitime{
\def\@time{%
  \@minute\time\@hour\@minute\divide\@hour\@xv
  \ifnum\@hour<\@x 0\fi\the\@hour:%
  \multiply\@hour\@xv\advance\@minute-\@hour
  \ifnum\@minute<\@x 0\fi\the\@minute
  }}%
\def\x@hyperref#1#2#3{%
   \catcode`\~ = 12
   \catcode`\$ = 12
   \catcode`\_ = 12
   \catcode`\# = 12
   \catcode`\& = 12
   \y@hyperref{#1}{#2}{#3}%
}
\def\y@hyperref#1#2#3#4{%
   #2\ref{#4}#3
   \catcode`\~ = 13
   \catcode`\$ = 3
   \catcode`\_ = 8
   \catcode`\# = 6
   \catcode`\& = 4
}
\def\QCTOpt[#1]#2{%
  \def\QCTOptB{#1}
  \def\QCTOptA{#2}
}
\def\QCTNOpt#1{%
  \def\QCTOptA{#1}
  \let\QCTOptB\empty
}
\def\Qct{%
  \@ifnextchar[{%
    \QCTOpt}{\QCTNOpt}
}
\def\QCBOpt[#1]#2{%
  \def\QCBOptB{#1}%
  \def\QCBOptA{#2}%
}
\def\QCBNOpt#1{%
  \def\QCBOptA{#1}%
  \let\QCBOptB\empty
}
\def\Qcb{%
  \@ifnextchar[{%
    \QCBOpt}{\QCBNOpt}%
}
\def\PrepCapArgs{%
  \ifx\QCBOptA\empty
    \ifx\QCTOptA\empty
      {}%
    \else
      \ifx\QCTOptB\empty
        {\QCTOptA}%
      \else
        [\QCTOptB]{\QCTOptA}%
      \fi
    \fi
  \else
    \ifx\QCBOptA\empty
      {}%
    \else
      \ifx\QCBOptB\empty
        {\QCBOptA}%
      \else
        [\QCBOptB]{\QCBOptA}%
      \fi
    \fi
  \fi
}
\def\GRAPHICSPS#1{%
 \ifcase\GRAPHICSTYPE
   \special{ps: #1}%
 \or
   \special{language "PS", include "#1"}%
 \fi
}%
\def\graffile#1#2#3#4{%
    \bgroup
       \@inlabelfalse
       \leavevmode
       \@ifundefined{bbl@deactivate}{\def~{\string~}}{\activesoff}%
        \raise -#4 \BOXTHEFRAME{%
           \hbox to #2{\raise #3\hbox to #2{\null #1\hfil}}}%
    \egroup
}%
\def\draftbox#1#2#3#4{%
 \leavevmode\raise -#4 \hbox{%
  \frame{\rlap{\protect\tiny #1}\hbox to #2%
   {\vrule height#3 width\z@ depth\z@\hfil}%
  }%
 }%
}%
\let\nographics=\@msidraft
\newif\ifwasdraft
\def\GRAPHIC#1#2#3#4#5{%
   \ifnum\@msidraft=\@ne\draftbox{#2}{#3}{#4}{#5}%
   \else\graffile{#1}{#3}{#4}{#5}%
   \fi
}
\def\addtoLaTeXparams#1{%
    \edef\LaTeXparams{\LaTeXparams #1}}%
\newif\ifBoxFrame \BoxFramefalse
\newif\ifOverFrame \OverFramefalse
\newif\ifUnderFrame \UnderFramefalse
\def\BOXTHEFRAME#1{%
   \hbox{%
      \ifBoxFrame
         \frame{#1}%
      \else
         {#1}%
      \fi
   }%
}
\def\doFRAMEparams#1{\BoxFramefalse\OverFramefalse\UnderFramefalse\readFRAMEparams#1\end}%
\def\readFRAMEparams#1{%
 \ifx#1\end%
  \let\next=\relax
  \else
  \ifx#1i\dispkind=\z@\fi
  \ifx#1d\dispkind=\@ne\fi
  \ifx#1f\dispkind=\tw@\fi
  \ifx#1t\addtoLaTeXparams{t}\fi
  \ifx#1b\addtoLaTeXparams{b}\fi
  \ifx#1p\addtoLaTeXparams{p}\fi
  \ifx#1h\addtoLaTeXparams{h}\fi
  \ifx#1X\BoxFrametrue\fi
  \ifx#1O\OverFrametrue\fi
  \ifx#1U\UnderFrametrue\fi
  \ifx#1w
    \ifnum\@msidraft=1\wasdrafttrue\else\wasdraftfalse\fi
    \@msidraft=\@ne
  \fi
  \let\next=\readFRAMEparams
  \fi
 \next
 }%
\def\IFRAME#1#2#3#4#5#6{%
      \bgroup
      \let\QCTOptA\empty
      \let\QCTOptB\empty
      \let\QCBOptA\empty
      \let\QCBOptB\empty
      #6%
      \parindent=0pt
      \leftskip=0pt
      \rightskip=0pt
      \setbox0=\hbox{\QCBOptA}%
      \@tempdima=#1\relax
      \ifOverFrame
          \typeout{This is not implemented yet}%
          \show\HELP
      \else
         \ifdim\wd0>\@tempdima
            \advance\@tempdima by \@tempdima
            \ifdim\wd0 >\@tempdima
               \setbox1 =\vbox{%
                  \unskip\hbox to \@tempdima{\hfill\GRAPHIC{#5}{#4}{#1}{#2}{#3}\hfill}%
                  \unskip\hbox to \@tempdima{\parbox[b]{\@tempdima}{\QCBOptA}}%
               }%
               \wd1=\@tempdima
            \else
               \textwidth=\wd0
               \setbox1 =\vbox{%
                 \noindent\hbox to \wd0{\hfill\GRAPHIC{#5}{#4}{#1}{#2}{#3}\hfill}\\%
                 \noindent\hbox{\QCBOptA}%
               }%
               \wd1=\wd0
            \fi
         \else
            \ifdim\wd0>0pt
              \hsize=\@tempdima
              \setbox1=\vbox{%
                \unskip\GRAPHIC{#5}{#4}{#1}{#2}{0pt}%
                \break
                \unskip\hbox to \@tempdima{\hfill \QCBOptA\hfill}%
              }%
              \wd1=\@tempdima
           \else
              \hsize=\@tempdima
              \setbox1=\vbox{%
                \unskip\GRAPHIC{#5}{#4}{#1}{#2}{0pt}%
              }%
              \wd1=\@tempdima
           \fi
         \fi
         \@tempdimb=\ht1
         \advance\@tempdimb by -#2
         \advance\@tempdimb by #3
         \leavevmode
         \raise -\@tempdimb \hbox{\box1}%
      \fi
      \egroup%
}%
\def\DFRAME#1#2#3#4#5{%
  \hfil\break
  \bgroup
     \leftskip\@flushglue
     \rightskip\@flushglue
     \parindent\z@
     \parfillskip\z@skip
     \let\QCTOptA\empty
     \let\QCTOptB\empty
     \let\QCBOptA\empty
     \let\QCBOptB\empty
     \vbox\bgroup
        \ifOverFrame
           #5\QCTOptA\par
        \fi
        \GRAPHIC{#4}{#3}{#1}{#2}{\z@}%
        \ifUnderFrame
           \break#5\QCBOptA
        \fi
     \egroup
   \egroup
   \break
}%
\def\FFRAME#1#2#3#4#5#6#7{%
  \@ifundefined{floatstyle}
    {
     \begin{figure}[#1]%
    }
    {
     \ifx#1h
      \begin{figure}[H]%
     \else
      \begin{figure}[#1]%
     \fi
    }
  \let\QCTOptA\empty
  \let\QCTOptB\empty
  \let\QCBOptA\empty
  \let\QCBOptB\empty
  \ifOverFrame
    #4
    \ifx\QCTOptA\empty
    \else
      \ifx\QCTOptB\empty
        \caption{\QCTOptA}%
      \else
        \caption[\QCTOptB]{\QCTOptA}%
      \fi
    \fi
    \ifUnderFrame\else
      \label{#5}%
    \fi
  \else
    \UnderFrametrue%
  \fi
  \begin{center}\GRAPHIC{#7}{#6}{#2}{#3}{\z@}\end{center}%
  \ifUnderFrame
    #4
    \ifx\QCBOptA\empty
      \caption{}%
    \else
      \ifx\QCBOptB\empty
        \caption{\QCBOptA}%
      \else
        \caption[\QCBOptB]{\QCBOptA}%
      \fi
    \fi
    \label{#5}%
  \fi
  \end{figure}%
 }%
\def\makeactives{
  \catcode`\"=\active
  \catcode`\;=\active
  \catcode`\:=\active
  \catcode`\'=\active
  \catcode`\~=\active
}
   \gdef\activesoff{%
      \def"{\string"}%
      \def;{\string;}%
      \def:{\string:}%
      \def'{\string'}%
      \def~{\string~}%
    }
\def\FRAME#1#2#3#4#5#6#7#8{%
 \bgroup
 \ifnum\@msidraft=\@ne
   \wasdrafttrue
 \else
   \wasdraftfalse%
 \fi
 \def\LaTeXparams{}%
 \dispkind=\z@
 \def\LaTeXparams{}%
 \doFRAMEparams{#1}%
 \ifnum\dispkind=\z@\IFRAME{#2}{#3}{#4}{#7}{#8}{#5}\else
  \ifnum\dispkind=\@ne\DFRAME{#2}{#3}{#7}{#8}{#5}\else
   \ifnum\dispkind=\tw@
    \edef\@tempa{\noexpand\FFRAME{\LaTeXparams}}%
    \@tempa{#2}{#3}{#5}{#6}{#7}{#8}%
    \fi
   \fi
  \fi
  \ifwasdraft\@msidraft=1\else\@msidraft=0\fi{}%
  \egroup
 }%
\def\TEXUX#1{"texux"}
\long\def\QQQ#1#2{%
     \long\expandafter\def\csname#1\endcsname{#2}}%
\long\def\QQA#1#2{}%
\def\QTR#1#2{{\csname#1\endcsname {#2}}}%
\def\EXPAND#1[#2]#3{}%
\def\NOEXPAND#1[#2]#3{}%
\def\LaTeXparent#1{}%
\def\ChildStyles#1{}%
\def\ChildDefaults#1{}%
\def\QTagDef#1#2#3{}%
  \providecommand{\UNICODE}[2][]{\protect\rule{.1in}{.1in}}
  \providecommand{\U}[1]{\protect\rule{.1in}{.1in}}
\def\QQfnmark#1{\footnotemark}
 \def\abstract{%
  \if@twocolumn
   \section*{Abstract (Not appropriate in this style!)}%
   \else \small
   \begin{center}{\bf Abstract\vspace{-.5em}\vspace{\z@}}\end{center}%
   \quotation
   \fi
  }%
   \def\registered{\relax\ifmmode{}\r@gistered
                    \else$\m@th\r@gistered$\fi}%
 \def\r@gistered{^{\ooalign
  {\hfil\raise.07ex\hbox{$\scriptstyle\rm\text{R}$}\hfil\crcr
  \mathhexbox20D}}}}{}%
\newdimen\theight
\def\newfmtname{LaTeX2e}
  \DeclareOldFontCommand{\rm}{\normalfont\rmfamily}{\mathrm}
  \DeclareOldFontCommand{\sf}{\normalfont\sffamily}{\mathsf}
  \DeclareOldFontCommand{\tt}{\normalfont\ttfamily}{\mathtt}
  \DeclareOldFontCommand{\bf}{\normalfont\bfseries}{\mathbf}
  \DeclareOldFontCommand{\it}{\normalfont\itshape}{\mathit}
  \DeclareOldFontCommand{\sl}{\normalfont\slshape}{\@nomath\sl}
  \DeclareOldFontCommand{\sc}{\normalfont\scshape}{\@nomath\sc}
\def\alpha{{\Greekmath 010B}}%
\def\beta{{\Greekmath 010C}}%
\def\gamma{{\Greekmath 010D}}%
\def\delta{{\Greekmath 010E}}%
\def\epsilon{{\Greekmath 010F}}%
\def\zeta{{\Greekmath 0110}}%
\def\eta{{\Greekmath 0111}}%
\def\theta{{\Greekmath 0112}}%
\def\iota{{\Greekmath 0113}}%
\def\kappa{{\Greekmath 0114}}%
\def\lambda{{\Greekmath 0115}}%
\def\mu{{\Greekmath 0116}}%
\def\nu{{\Greekmath 0117}}%
\def\xi{{\Greekmath 0118}}%
\def\pi{{\Greekmath 0119}}%
\def\rho{{\Greekmath 011A}}%
\def\sigma{{\Greekmath 011B}}%
\def\tau{{\Greekmath 011C}}%
\def\upsilon{{\Greekmath 011D}}%
\def\phi{{\Greekmath 011E}}%
\def\chi{{\Greekmath 011F}}%
\def\psi{{\Greekmath 0120}}%
\def\omega{{\Greekmath 0121}}%
\def\varepsilon{{\Greekmath 0122}}%
\def\vartheta{{\Greekmath 0123}}%
\def\varpi{{\Greekmath 0124}}%
\def\varrho{{\Greekmath 0125}}%
\def\varsigma{{\Greekmath 0126}}%
\def\varphi{{\Greekmath 0127}}%
\def\nabla{{\Greekmath 0272}}
\def\FindBoldGroup{%
   {\setbox0=\hbox{$\mathbf{x\global\edef\theboldgroup{\the\mathgroup}}$}}%
}
\def\Greekmath#1#2#3#4{%
    \if@compatibility
        \ifnum\mathgroup=\symbold
           \mathchoice{\mbox{\boldmath$\displaystyle\mathchar"#1#2#3#4$}}%
                      {\mbox{\boldmath$\textstyle\mathchar"#1#2#3#4$}}%
                      {\mbox{\boldmath$\scriptstyle\mathchar"#1#2#3#4$}}%
                      {\mbox{\boldmath$\scriptscriptstyle\mathchar"#1#2#3#4$}}%
        \else
           \mathchar"#1#2#3#4%
        \fi
    \else
        \FindBoldGroup
        \ifnum\mathgroup=\theboldgroup 
           \mathchoice{\mbox{\boldmath$\displaystyle\mathchar"#1#2#3#4$}}%
                      {\mbox{\boldmath$\textstyle\mathchar"#1#2#3#4$}}%
                      {\mbox{\boldmath$\scriptstyle\mathchar"#1#2#3#4$}}%
                      {\mbox{\boldmath$\scriptscriptstyle\mathchar"#1#2#3#4$}}%
        \else
           \mathchar"#1#2#3#4%
        \fi
      \fi}
\newif\ifGreekBold  \GreekBoldfalse
\let\SAVEPBF=\pbf
\def\pbf{\GreekBoldtrue\SAVEPBF}%
  \newcounter{equationnumber}
  \def\mathletters{%
     \addtocounter{equation}{1}
     \edef\@currentlabel{\theequation}%
     \setcounter{equationnumber}{\c@equation}
     \setcounter{equation}{0}%
     \edef\theequation{\@currentlabel\noexpand\alph{equation}}%
  }
    \def\BibTeX{{\rm B\kern-.05em{\sc i\kern-.025em b}\kern-.08em
                 T\kern-.1667em\lower.7ex\hbox{E}\kern-.125emX}}}{}%
\def\AmS{{\protect\usefont{OMS}{cmsy}{m}{n}%
                A\kern-.1667em\lower.5ex\hbox{M}\kern-.125emS}}}{}%
\def\@@eqncr{\let\@tempa\relax
    \ifcase\@eqcnt \def\@tempa{& & &}\or \def\@tempa{& &}%
      \else \def\@tempa{&}\fi
     \@tempa
     \if@eqnsw
        \iftag@
           \@taggnum
        \else
           \@eqnnum\stepcounter{equation}%
        \fi
     \fi
     \global\tag@false
     \global\@eqnswtrue
     \global\@eqcnt\z@\cr}
\def\TCItag{\@ifnextchar*{\@TCItagstar}{\@TCItag}}
\def\@TCItag#1{%
    \global\tag@true
    \global\def\@taggnum{(#1)}}
\def\@TCItagstar*#1{%
    \global\tag@true
    \global\def\@taggnum{#1}}
\def\ExitTCILatex{\makeatother }
\let\DOTSI\relax
\def\RIfM@{\relax\ifmmode}%
\def\FN@{\futurelet\next}%
\def\iint{\DOTSI\intno@\tw@\FN@\ints@}%
\def\iiint{\DOTSI\intno@\thr@@\FN@\ints@}%
\def\iiiint{\DOTSI\intno@4 \FN@\ints@}%
\def\idotsint{\DOTSI\intno@\z@\FN@\ints@}%
\def\ints@{\findlimits@\ints@@}%
\newif\iflimtoken@
\newif\iflimits@
\def\findlimits@{\limtoken@true\ifx\next\limits\limits@true
 \else\ifx\next\nolimits\limits@false\else
 \limtoken@false\ifx\ilimits@\nolimits\limits@false\else
 \ifinner\limits@false\else\limits@true\fi\fi\fi\fi}%
\def\multint@{\int\ifnum\intno@=\z@\intdots@                          
 \else\intkern@\fi                                                    
 \ifnum\intno@>\tw@\int\intkern@\fi                                   
 \ifnum\intno@>\thr@@\int\intkern@\fi                                 
 \int}
\def\multintlimits@{\intop\ifnum\intno@=\z@\intdots@\else\intkern@\fi
 \ifnum\intno@>\tw@\intop\intkern@\fi
 \ifnum\intno@>\thr@@\intop\intkern@\fi\intop}%
\def\intic@{%
    \mathchoice{\hskip.5em}{\hskip.4em}{\hskip.4em}{\hskip.4em}}%
\def\negintic@{\mathchoice
 {\hskip-.5em}{\hskip-.4em}{\hskip-.4em}{\hskip-.4em}}%
\def\ints@@{\iflimtoken@                                              
 \def\ints@@@{\iflimits@\negintic@
   \mathop{\intic@\multintlimits@}\limits                             
  \else\multint@\nolimits\fi                                          
  \eat@}
 \else                                                                
 \def\ints@@@{\iflimits@\negintic@
  \mathop{\intic@\multintlimits@}\limits\else
  \multint@\nolimits\fi}\fi\ints@@@}%
\def\intkern@{\mathchoice{\!\!\!}{\!\!}{\!\!}{\!\!}}%
\def\plaincdots@{\mathinner{\cdotp\cdotp\cdotp}}%
\def\intdots@{\mathchoice{\plaincdots@}%
 {{\cdotp}\mkern1.5mu{\cdotp}\mkern1.5mu{\cdotp}}%
 {{\cdotp}\mkern1mu{\cdotp}\mkern1mu{\cdotp}}%
 {{\cdotp}\mkern1mu{\cdotp}\mkern1mu{\cdotp}}}%
\def\RIfM@{\relax\protect\ifmmode}
\def\text{\RIfM@\expandafter\text@\else\expandafter\mbox\fi}
\let\nfss@text\text
\def\text@#1{\mathchoice
   {\textdef@\displaystyle\f@size{#1}}%
   {\textdef@\textstyle\tf@size{\firstchoice@false #1}}%
   {\textdef@\textstyle\sf@size{\firstchoice@false #1}}%
   {\textdef@\textstyle \ssf@size{\firstchoice@false #1}}%
   \glb@settings}
\def\textdef@#1#2#3{\hbox{{%
                    \everymath{#1}%
                    \let\f@size#2\selectfont
                    #3}}}
\newif\iffirstchoice@
\def\Let@{\relax\iffalse{\fi\let\\=\cr\iffalse}\fi}%
\def\vspace@{\def\vspace##1{\crcr\noalign{\vskip##1\relax}}}%
\def\multilimits@{\bgroup\vspace@\Let@
 \baselineskip\fontdimen10 \scriptfont\tw@
 \advance\baselineskip\fontdimen12 \scriptfont\tw@
 \lineskip\thr@@\fontdimen8 \scriptfont\thr@@
 \lineskiplimit\lineskip
 \vbox\bgroup\ialign\bgroup\hfil$\m@th\scriptstyle{##}$\hfil\crcr}%
\def\Sb{_\multilimits@}%
\def\endSb{\crcr\egroup\egroup\egroup}%
\def\Sp{^\multilimits@}%
\newdimen\ex@
\def\rightarrowfill@#1{$#1\m@th\mathord-\mkern-6mu\cleaders
 \hbox{$#1\mkern-2mu\mathord-\mkern-2mu$}\hfill
 \mkern-6mu\mathord\rightarrow$}%
\def\leftarrowfill@#1{$#1\m@th\mathord\leftarrow\mkern-6mu\cleaders
 \hbox{$#1\mkern-2mu\mathord-\mkern-2mu$}\hfill\mkern-6mu\mathord-$}%
\def\leftrightarrowfill@#1{$#1\m@th\mathord\leftarrow
\mkern-6mu\cleaders
 \hbox{$#1\mkern-2mu\mathord-\mkern-2mu$}\hfill
 \mkern-6mu\mathord\rightarrow$}%
\def\overrightarrow{\mathpalette\overrightarrow@}%
\def\overrightarrow@#1#2{\vbox{\ialign{##\crcr\rightarrowfill@#1\crcr
 \noalign{\kern-\ex@\nointerlineskip}$\m@th\hfil#1#2\hfil$\crcr}}}%
\def\overleftarrow{\mathpalette\overleftarrow@}%
\def\overleftarrow@#1#2{\vbox{\ialign{##\crcr\leftarrowfill@#1\crcr
 \noalign{\kern-\ex@\nointerlineskip}$\m@th\hfil#1#2\hfil$\crcr}}}%
\def\overleftrightarrow{\mathpalette\overleftrightarrow@}%
\def\overleftrightarrow@#1#2{\vbox{\ialign{##\crcr
   \leftrightarrowfill@#1\crcr
 \noalign{\kern-\ex@\nointerlineskip}$\m@th\hfil#1#2\hfil$\crcr}}}%
\def\underrightarrow{\mathpalette\underrightarrow@}%
\def\underrightarrow@#1#2{\vtop{\ialign{##\crcr$\m@th\hfil#1#2\hfil
  $\crcr\noalign{\nointerlineskip}\rightarrowfill@#1\crcr}}}%
\def\underleftarrow{\mathpalette\underleftarrow@}%
\def\underleftarrow@#1#2{\vtop{\ialign{##\crcr$\m@th\hfil#1#2\hfil
  $\crcr\noalign{\nointerlineskip}\leftarrowfill@#1\crcr}}}%
\def\underleftrightarrow{\mathpalette\underleftrightarrow@}%
\def\underleftrightarrow@#1#2{\vtop{\ialign{##\crcr$\m@th
  \hfil#1#2\hfil$\crcr
 \noalign{\nointerlineskip}\leftrightarrowfill@#1\crcr}}}%
\def\qopnamewl@#1{\mathop{\operator@font#1}\nlimits@}
\let\nlimits@\displaylimits
\def\setboxz@h{\setbox\z@\hbox}
\def\varlim@#1#2{\mathop{\vtop{\ialign{##\crcr
 \hfil$#1\m@th\operator@font lim$\hfil\crcr
 \noalign{\nointerlineskip}#2#1\crcr
 \noalign{\nointerlineskip\kern-\ex@}\crcr}}}}
 \def\rightarrowfill@#1{\m@th\setboxz@h{$#1-$}\ht\z@\z@
  $#1\copy\z@\mkern-6mu\cleaders
  \hbox{$#1\mkern-2mu\box\z@\mkern-2mu$}\hfill
  \mkern-6mu\mathord\rightarrow$}
\def\leftarrowfill@#1{\m@th\setboxz@h{$#1-$}\ht\z@\z@
  $#1\mathord\leftarrow\mkern-6mu\cleaders
  \hbox{$#1\mkern-2mu\copy\z@\mkern-2mu$}\hfill
  \mkern-6mu\box\z@$}
\def\projlim{\qopnamewl@{proj\,lim}}
\def\injlim{\qopnamewl@{inj\,lim}}
\def\varinjlim{\mathpalette\varlim@\rightarrowfill@}
\def\varprojlim{\mathpalette\varlim@\leftarrowfill@}
\def\varliminf{\mathpalette\varliminf@{}}
\def\varliminf@#1{\mathop{\underline{\vrule\@depth.2\ex@\@width\z@
   \hbox{$#1\m@th\operator@font lim$}}}}
\def\varlimsup{\mathpalette\varlimsup@{}}
\def\varlimsup@#1{\mathop{\overline
  {\hbox{$#1\m@th\operator@font lim$}}}}
\def\align{\@verbatim \frenchspacing\@vobeyspaces \@alignverbatim
You are using the "align" environment in a style in which it is not defined.}
\let\csname endalign*\endcsname =\endtrivlist
\def\alignat{\@verbatim \frenchspacing\@vobeyspaces \@alignatverbatim
You are using the "alignat" environment in a style in which it is not defined.}
\let\csname endalignat*\endcsname =\endtrivlist
\def\xalignat{\@verbatim \frenchspacing\@vobeyspaces \@xalignatverbatim
You are using the "xalignat" environment in a style in which it is not defined.}
\let\csname endxalignat*\endcsname =\endtrivlist
\def\gather{\@verbatim \frenchspacing\@vobeyspaces \@gatherverbatim
You are using the "gather" environment in a style in which it is not defined.}
\let\csname endgather*\endcsname =\endtrivlist
\def\multiline{\@verbatim \frenchspacing\@vobeyspaces \@multilineverbatim
You are using the "multiline" environment in a style in which it is not defined.}
\let\csname endmultiline*\endcsname =\endtrivlist
\def\arrax{\@verbatim \frenchspacing\@vobeyspaces \@arraxverbatim
You are using a type of "array" construct that is only allowed in AmS-LaTeX.}
\def\tabulax{\@verbatim \frenchspacing\@vobeyspaces \@tabulaxverbatim
You are using a type of "tabular" construct that is only allowed in AmS-LaTeX.}
\let\csname endarrax*\endcsname =\endtrivlist
\let\csname endtabulax*\endcsname =\endtrivlist
 \def\endequation{%
     \ifmmode\ifinner 
      \iftag@
        \addtocounter{equation}{-1} 
        $\hfil
           \displaywidth\linewidth\@taggnum\egroup \endtrivlist
        \global\tag@false
        \global\@ignoretrue
      \else
        $\hfil
           \displaywidth\linewidth\@eqnnum\egroup \endtrivlist
        \global\tag@false
        \global\@ignoretrue
      \fi
     \else
      \iftag@
        \addtocounter{equation}{-1} 
        \eqno \hbox{\@taggnum}
        \global\tag@false%
        $$\global\@ignoretrue
      \else
        \eqno \hbox{\@eqnnum}
        $$\global\@ignoretrue
      \fi
     \fi\fi
 }
 \newif\iftag@ \tag@false
 \def\TCItag{\@ifnextchar*{\@TCItagstar}{\@TCItag}}
 \def\@TCItag#1{%
     \global\tag@true
     \global\def\@taggnum{(#1)}}
 \def\@TCItagstar*#1{%
     \global\tag@true
     \global\def\@taggnum{#1}}
     \def\tag{\@ifnextchar*{\@tagstar}{\@tag}}
     \def\@tag#1{%
         \global\tag@true
         \global\def\@taggnum{(#1)}}
     \def\@tagstar*#1{%
         \global\tag@true
         \global\def\@taggnum{#1}}
\def\binom#1#2{{#1 \choose #2}}%
\begin{document}
\title{From random walks to rough paths}
\author{Emmanuel Breuillard, Peter Friz and Martin Huesmann}
\address{Laboratoire de Mathematiques, Universite Paris 11,\\
91405 Orsay, France; DPMMS, University of Cambridge, CB3 0WB,\ UK; Math.
Institut, Universit\"{a}t Bonn, D-53115. Corresponding author:
P.K.Friz@statslab.cam.ac.uk}

\begin{abstract}
Donsker's invariance principle is shown to hold for random walks in rough
path topology. As application, we obtain Donsker-type weak limit theorems
for stochastic integrals and differential equations.
\end{abstract}

\keywords{Donskers's theorem, rough paths}
\maketitle

\section{Introduction}

Consider a random walk in $\mathbb{R}^{d}$, given by the partial sums of a
sequence of independent random-variables $\left( \xi _{i}:i=1,2,3,\dots
\right) $, identically distributed, $\xi _{i}\overset{\mathcal{D}}{=}\xi $
with zero-mean and unit variance. \textit{Donsker's theorem} (e.g. \cite%
{revuz-yor-1999}) states that the rescaled and piecewise-linearly-connected
random-walk 
\begin{equation*}
W_{t}^{\left( n\right) }=\frac{1}{n^{1/2}}\left( \xi _{1}+\dots +\xi _{\left[
tn\right] }+\left( nt-\left[ nt\right] \right) \xi _{\left[ nt\right]
+1}\right)
\end{equation*}%
converges weakly to $d$-dimensional Brownian motion $B$ in the sense that 
\begin{equation*}
\mathbb{E}\left( f\left( W^{\left( n\right) }\right) \right) \rightarrow 
\mathbb{E}\left( f\left( B\right) \right) \text{ as }n\rightarrow \infty ,
\end{equation*}%
for all continuous, bounded functionals $f$ on $C\left( \left[ 0,1\right] ,%
\mathbb{R}^{d}\right) $ with uniform topology; we shall use the shorter
notation 
\begin{equation*}
W^{\left( n\right) }\implies B\text{ on }C\left( \left[ 0,1\right] ,\mathbb{R%
}^{d}\right)
\end{equation*}%
to decribe this type of convergence. It was observed by Lamperti in \cite%
{MR0143245} that this convergence takes place in $\alpha $-H\"{o}lder
topology, i.e. 
\begin{equation*}
W^{\left( n\right) }\implies B\text{ on }C^{\alpha \text{-H\"{o}l}}\left( %
\left[ 0,1\right] ,\mathbb{R}^{d}\right)
\end{equation*}%
for $\alpha <\left( p-1\right) /2p$ provided $\mathbb{E}\left( \left\vert
\xi \right\vert ^{2p}\right) <\infty ,$ $p>1$ and the condition relating $%
\alpha $ and $p$ is sharp. In particular, for convergence in $\alpha $-H\"{o}%
lder topology for any $\alpha <1/2$ one needs finite moments of any order.

Since $t\mapsto W_{t}^{\left( n\right) }$ is a (random) Lipschitz path, it
can be canonically lifted by computing iterated integrals up to any given
order, say $N$. The resulting, lifted, path is denoted by 
\begin{equation*}
t\mapsto S_{N}(W_{t}^{\left( n\right) })\equiv \left( 1,W_{t}^{\left(
n\right) },\int_{0}^{t}W_{s}^{\left( n\right) }\otimes W_{s}^{\left(
n\right) },\dots \right) 
\end{equation*}%
and can be viewed as a (random) path with values in the step-$N$ free
nilpotent group, realized as the subset $G^{N}\left( \mathbb{R}^{d}\right) $
of the tensor-algebra $\mathbb{R\oplus R}^{d}\oplus \dots \oplus \left( 
\mathbb{R}^{d}\right) ^{\otimes N}$. See \cite{chen-57} and \cite{lyons-98,
lyons-qian-02, MR2314753} for background on iterated integrals and rough
paths. One can ask if $S_{N}\left( W_{\cdot }^{\left( n\right) }\right) $
converges weakly to a limit. The obvious candidate is Brownian motion $%
\mathbf{B}\ $on the step-$N$ free nilpotent group i.e. the symmetric
diffusion with generator given by the sub-Laplacian on the stratified Lie
group $G^{N}\left( \mathbb{R}^{d}\right) $ associated to the covariance
matrix of $\mathbf{\xi }$. In uniform topology, the answer is affirmative
and follows from work of Stroock--Varadhan \cite{MR0517406}, see also \cite%
{MR1009453}.

There is motivation from \textit{rough path theory} to work in stronger
topologies than the uniform one. Indeed, various operations of SDE theory
and stochastic integration theory are \textit{continuous }functions of 
\textit{enhanced Brownian motion} (i.e. Brownian motion plus L\'{e}vy's
area, or equivalently: Brownian motion on the step-$2$ free nilpotent group)
in rough path sense. More specifically, any $G^{2}\left( \mathbb{R}%
^{d}\right) $-valued path $\mathbf{x}\left( \cdot \right) $ has canonically
defined path increments, denoted by $\mathbf{x}_{s,t}:=\mathbf{x}_{s}^{-1}%
\mathbf{x}_{t}$, so that $\left\Vert \mathbf{x}_{s,t}\right\Vert =d\left( 
\mathbf{x}_{s},\mathbf{x}_{t}\right) $ where $\left\Vert \cdot \right\Vert $
and $d$ are the (Euclidean) Carnot-Caratheodory norm and metric on $%
G^{2}\left( \mathbb{R}^{d}\right) $. In its simplest (non-trivial) setting,
and under mild regularity conditions on the vector fields $V_{1},\dots
,V_{d} $, rough path theory asserts that the ODE solution to 
\begin{equation}
dy=\sum_{i=1}^{d}V_{i}(y)dx^{i},\,\,\,y\left( 0\right) =y_{0}\in \mathbb{R}%
^{e},  \label{ODE}
\end{equation}%
depends continuously (uniformly on bounded sets) on the driving signal $%
x\left( \cdot \right) \in C^{\infty }\left( \left[ 0,1\right] ,\mathbb{R}%
^{d}\right) $ with respect to\footnote{%
It is crucial to have $\alpha >1/3$. The solution map $x\mapsto y$ would not
be continuous in $d_{\alpha \text{-H\"{o}l}}$ for $\alpha \leq 1/3$.} 
\begin{equation*}
\tilde{d}_{\alpha \text{-H\"{o}l}}\left( x,x^{\prime }\right) :=d_{\alpha 
\text{-H\"{o}l}}\left( S_{2}\left( x\right) ,S_{2}\left( x^{\prime }\right)
\right) ,\,\,\,\alpha \in \left( 1/3,1/2\right) ,
\end{equation*}%
where $\mathbf{x}:=S_{2}\left( x\right) $ and 
\begin{equation*}
d_{\alpha \text{-H\"{o}l}}\left( \mathbf{x,x}^{\prime }\right) =\sup_{s,t\in %
\left[ 0,1\right] }\frac{d\left( \mathbf{x}_{s,t},\mathbf{x}_{s,t}^{\prime
}\right) }{\left\vert t-s\right\vert ^{\alpha }}
\end{equation*}%
It is easy to see that\footnote{$\mathbf{1}$ denotes the trivial path
identically equal to the unit element $\mathbf{1\in }$ $G^{2}\left( \mathbb{R%
}^{d}\right) $} 
\begin{equation*}
C^{\alpha \text{-H\"{o}l}}\left( \left[ 0,1\right] ,G^{2}\left( \mathbb{R}%
^{d}\right) \right) :=\left\{ \mathbf{x}:d_{\alpha \text{-H\"{o}l}}\left( 
\mathbf{x,1}\right) <\infty \right\} ,
\end{equation*}%
is a complete metric space for $d_{\alpha \text{-H\"{o}l}}$ (up to constants
:\ $d_{\alpha \text{-H\"{o}l}}\left( \mathbf{x,x}^{\prime }\right) =0$ iff $%
\mathbf{x}_{t}^{-1}\mathbf{x}_{t}^{\prime }\equiv $ constant). It follows
that the very meaning of the ODE (\ref{ODE}) can be extended, in a unique
and continuous fashion, to the $d_{\alpha \text{-H\"{o}l}}$-closure of
lifted smooth paths in $C^{\alpha \text{-H\"{o}l}}\left( \left[ 0,1\right]
,G^{2}\left( \mathbb{R}^{d}\right) \right) .$ This closure is a Polish space
for the metric $d_{\alpha \text{-H\"{o}l}}$ and is denoted by 
\begin{equation*}
C^{0,\alpha \text{-H\"{o}l}}\left( \left[ 0,1\right] ,G^{2}\left( \mathbb{R}%
^{d}\right) \right) .
\end{equation*}%
If $\alpha \in \left( 1/3,1/2\right) $ it is known as the space of \textit{%
geometric }$\alpha $\textit{-H\"{o}lder rough paths\footnote{%
The same construction applies when $G^{2}\left( \mathbb{R}^{d}\right) $ is
replaced by $G^{N}\left( \mathbb{R}^{d}\right) $ in which case one requires $%
\alpha \in \left( 1/\left( N+1\right) ,1/N\right) $ in order to speak of 
\textit{geometric }$\alpha $\textit{-H\"{o}lder rough paths}.}}. It includes
almost every realization of \textit{enhanced Brownian motion} $\mathbf{B}$, 
\begin{equation*}
\mathbf{B}\equiv \left( 1,B,\int B\otimes \circ dB\right)
\end{equation*}%
where $\circ dB$ denotes the Stratonovich differential of $B$. The resulting
\textquotedblleft generalized\textquotedblright\ ODE solution driven by $%
\mathbf{B}$ can then be identified as the classical Stratonovich SDE
solution, \cite{lyons-98, lyons-qian-02, MR2314753}. This provides an
essentially deterministic approach to SDE theory with numerous benefits when
it comes to regularity questions of the It\^{o} map, construction of
stochastic flows, etc. Another property of such rough paths is that there is
a unique (again: modulo constants) lift of $\mathbf{x}\in C^{0,\alpha \text{%
-H\"{o}l}}\left( \left[ 0,1\right] ,G^{2}\left( \mathbb{R}^{d}\right)
\right) $ to a path of similar regularity in the step-$N$ group for all $%
N\geq 2,$%
\begin{equation*}
S_{N}\left( \mathbf{x}\right) \in C^{0,\alpha \text{-H\"{o}l}}\left( \left[
0,1\right] ,G^{N}\left( \mathbb{R}^{d}\right) \right) .
\end{equation*}%
(See \cite[Thm 3.7.]{MR2314753} for instance.) In the case of the enhanced
Brownian motion $\mathbf{x=B}\left( \omega \right) $, the process $%
S_{N}\left( \mathbf{B}\right) $ identifies as Brownian motion $B$ plus all
iterated Stratonovich integrals up to order $N$. In particular, $S_{N}\left( 
\mathbf{B}\right) $ is then realization of Brownian motion on the step-$N$
free nilpotent group.

\bigskip

Our main result is

\begin{theorem}
\label{ThmMainIntro}Assume $\mathbb{E}\left( \left\vert \xi \right\vert
^{2p}\right) <\infty $ for some real number $p\geq 4$. Then 
\begin{equation*}
S_{2}(W_{t}^{\left( n\right) })\implies \mathbf{B}\text{ in }C^{0,\alpha 
\text{-H\"{o}l}}\left( \left[ 0,1\right] ,G^{2}\left( \mathbb{R}^{d}\right)
\right)
\end{equation*}%
provided 
\begin{equation}
\alpha \in \left( \frac{1}{3},\frac{p^{\ast }-1}{2p^{\ast }}\right) \text{
with}\,\,\,p^{\ast }=\min \left( [p],2[p/2]\right)  \label{alphaRange}
\end{equation}%
In particular, if $\mathbb{E}\left( \left\vert \xi \right\vert ^{2p}\right)
<\infty $ for all $p<\infty $ then the weak convergence holds for any $%
\alpha <1/2$.
\end{theorem}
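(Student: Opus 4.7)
The plan is to apply the classical recipe for weak convergence on the Polish space $C^{0,\alpha\text{-H\"{o}l}}([0,1],G^{2}(\mathbb{R}^{d}))$: establish \textbf{tightness} of the laws $\{\mathrm{law}(S_{2}(W^{(n)}))\}_{n\geq 1}$ in $\alpha$-H\"{o}lder rough path topology for every admissible $\alpha$, and separately \textbf{identify} any weak limit with enhanced Brownian motion $\mathbf{B}$.

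For tightness, I would verify a Kolmogorov--Lamperti-type criterion for group-valued paths: it suffices to establish a uniform-in-$n$ moment bound
\begin{equation*}
\mathbb{E}\bigl[d\bigl(S_{2}(W^{(n)})_{s},\,S_{2}(W^{(n)})_{t}\bigr)^{2p^{\ast}}\bigr]\leq C|t-s|^{p^{\ast}},\qquad s,t\in[0,1],
\end{equation*}
from which an appropriate Garsia--Rodemich--Rumsey lemma (in its homogeneous-metric form) yields an $\alpha$-H\"{o}lder modulus with finite $2p^{\ast}$-moment for every $\alpha<(p^{\ast}-1)/(2p^{\ast})$. Using equivalence of the Carnot--Carath\'{e}odory distance on $G^{2}(\mathbb{R}^{d})$ with the homogeneous norm $|x^{(1)}|+|x^{(2)}|^{1/2}$, this splits into two estimates: the level-1 bound $\mathbb{E}[|W^{(n)}_{t}-W^{(n)}_{s}|^{2p^{\ast}}]\leq C|t-s|^{p^{\ast}}$, and the level-2 bound $\mathbb{E}[|\mathbb{W}^{(n)}_{s,t}|^{p^{\ast}}]\leq C|t-s|^{p^{\ast}}$, where $\mathbb{W}^{(n)}_{s,t}$ denotes the second-level iterated integral.

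The level-1 bound follows from Rosenthal's inequality (or a direct moment expansion) for centred iid partial sums, using that $2p^{\ast}\leq 2p$ is an even integer with $\mathbb{E}[|\xi|^{2p^{\ast}}]<\infty$. The \textbf{main obstacle} is the level-2 bound. At grid times $s=k/n$, $t=\ell/n$, one has
\begin{equation*}
\mathbb{W}^{(n)}_{s,t}\;=\;\tfrac{1}{n}\sum_{k<i\leq \ell}\Bigl(\sum_{k<j<i}\xi_{j}\Bigr)\otimes \xi_{i},
\end{equation*}
which, viewed as a function of the upper endpoint $\ell$, is a discrete martingale with respect to the natural filtration. Applying Burkholder--Davis--Gundy to this outer martingale, and then once more to the inner iid partial sums, yields the desired $|t-s|^{p^{\ast}}$ rate. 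The delicate point is that a clean combinatorial expansion of the $p^{\ast}$-th moment of the quadratic variation requires $p^{\ast}$ to be an even integer with $\mathbb{E}[|\xi|^{2p^{\ast}}]<\infty$; this is precisely why $p^{\ast}$ is defined as $\min([p],2[p/2])$, the largest even integer not exceeding $p$. The interpolation between grid points contributes only a lower-order term because $W^{(n)}$ is linear on each interval $[(k-1)/n,k/n]$, and the level-2 object there can be written down explicitly.

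With tightness in hand, identification is soft. Donsker's classical theorem gives $W^{(n)}\Rightarrow B$ uniformly, and the Stroock--Varadhan invariance principle for diffusions on nilpotent Lie groups (see \cite{MR0517406, MR1009453}) extends this to the uniform weak convergence $S_{2}(W^{(n)})\Rightarrow \mathbf{B}$ in $C([0,1],G^{2}(\mathbb{R}^{d}))$. Any subsequential weak limit of $\{S_{2}(W^{(n)})\}$ in the stronger $\alpha$-H\"{o}lder rough path topology must therefore agree in uniform topology with $\mathbf{B}$, and hence coincide with $\mathbf{B}$. Finally, a standard interpolation estimate of the form $d_{\alpha'\text{-H\"{o}l}}\lesssim d_{\infty}^{1-\alpha'/\alpha}\,d_{\alpha\text{-H\"{o}l}}^{\alpha'/\alpha}$, combined with uniform-in-$n$ tightness in $\alpha$-H\"{o}lder for some exponent slightly larger than the target, ensures the convergence actually takes place in the separable Polish subspace $C^{0,\alpha\text{-H\"{o}l}}$, completing the argument.
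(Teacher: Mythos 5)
Your proposal reaches the right conclusion, and its overall architecture (tightness in $\alpha$-H\"{o}lder rough path topology plus identification of the limit via the uniform-topology invariance principle of Stroock--Varadhan) matches the paper's; but the key tightness estimate is obtained by a genuinely different argument. The paper reduces, by left invariance and Chen's identity, to showing $\mathbb{E}\bigl[\Vert \mathbf{\xi}_{1}\otimes\dots\otimes\mathbf{\xi}_{k}\Vert^{2q}\bigr]=O(k^{q})$ for the group-valued walk, and proves this algebraically: the operator $TP(g)=\mathbb{E}\bigl(P(g\otimes\mathbf{\xi})\bigr)-P(g)$ lowers the homogeneous degree of a polynomial on $G^{2}(\mathbb{R}^{d})$ by two (Campbell--Baker--Hausdorff plus centering), whence $\mathbb{E}\bigl[P(\mathbf{\xi}_{1}\otimes\dots\otimes\mathbf{\xi}_{k})\bigr]=(T+1)^{k}P(1)=\sum_{l\le 2q}\binom{k}{l}T^{l}P(1)=O(k^{2q})$ applied to $P(e^{a})=\sum_i(a^{1;i})^{4q}+\sum_{i<j}(a^{2;ij})^{2q}$; it is exactly this polynomial argument that forces integer (indeed even) exponents and produces the coarsening $p\mapsto p^{\ast}$, and it generalizes verbatim to $G^{N}(\mathbb{R}^{d})$, which Section 4 exploits. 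You instead split the Carnot--Carath\'{e}odory norm into level-1 and level-2 pieces and hit the explicit second-level sum with Burkholder--Davis--Gundy (twice) and Rosenthal. This works: the outer sum is indeed a martingale in its upper index, and BDG followed by Minkowski's inequality in $L^{p^{\ast}/2}$ gives the $O(|t-s|^{p^{\ast}})$ bound (your displayed formula omits the diagonal contribution $\tfrac{1}{2n}\sum_{k<i\le\ell}\xi_{i}^{\otimes 2}$ coming from the linear interpolation, but that term is trivially of the right order). Two remarks on the comparison: first, your stated reason for the definition of $p^{\ast}$ is misplaced --- BDG plus Minkowski requires no combinatorial expansion and no even-integer exponent, so your route would in fact yield the sharper range $\alpha<(p-1)/(2p)$ at level $2$, stronger than the theorem as stated (the even-integer restriction is an artefact of the paper's polynomial method, as the authors themselves note); second, your argument is tied to the concrete step-$2$ iterated-integral representation, whereas the paper's operator argument applies uniformly to centered i.i.d.\ walks on any $G^{N}(\mathbb{R}^{d})$, which is what the paper's Section 4 and the discussion of general $N$ rely on.
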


Let us point out that Theorem \ref{ThmMainIntro} implies, of course, $\alpha 
$-H\"{o}lder convergence for \textit{all} $\alpha <\frac{p^{*}-1}{2p^{*}}$
but only for $\alpha >1/3$ do we actually get the interesting corollaries
regarding convergence in rough path topology. Let us also remark that weak
convergence of $S_{N}(W_{t}^{\left( n\right) })$ for all $N$ is a
(deterministic) consequence of Theorem \ref{ThmMainIntro}, cf. Corollary \ref%
{Cor3} below.

Although reminiscent of Lamperti's sharp upper bound on the H\"{o}lder
exponent, $\frac{p-1}{2p}$, the actual \textquotedblleft
coarsened\textquotedblright\ form of our upper bound in (\ref{alphaRange}),
with $p$ replaced by $p^{\ast },$ i.e. the largest even integer smaller or
equal to $p,$ comes from our argument (which requires us to work with
integer powers). To handle the case of \textquotedblleft integrability
level\textquotedblright\ $p\in (1,4)$ it is clear that the step-$2$ setting
will not be sufficient. Indeed, from Lamperti's bound, we would have to work
at least in the step-$N$ group with $N\sim 2p/\left( p-1\right) $. More
precisely, we would need to be able to work in $\alpha $-H\"{o}lder topology
for $G^{N}\left( \mathbb{R}^{d}\right) $-valued paths with 
\begin{equation*}
1/\left( N+1\right) <\alpha <\left( p-1\right) /2p.
\end{equation*}%
Any result of the form 
\begin{equation*}
S_{N}(W_{t}^{\left( n\right) })\implies S_{N}\left( \mathbf{B}\right) \text{
in }C^{0,\alpha \text{-H\"{o}l}}\left( \left[ 0,1\right] ,G^{N}\left( 
\mathbb{R}^{d}\right) \right)
\end{equation*}%
would then be equally interesting as it would constitute a \textquotedblleft
step-$N$\textquotedblright\ convergence result in rough path topology with
similar corollaries as those described below. Unfortunately, as explained in
Section \ref{SecFiniteMoments}, the \textquotedblleft
coarsened\textquotedblright\ H\"{o}lder exponents that we obtain in the step-%
$N$ setting are not bigger than $1/\left( N+1\right) $ in general. Although
we suspect this to be an artefact of our proof, we currently do not know how
to bypass this difficulty in order to handle $p\in (1,4)$.

Let us now discuss some applications to Theorem \ref{ThmMainIntro}. The
afore-mentioned continuity results of rough path theory lead immediately to
corollaries of the following type (recall that we assume existence of a
moment of order $p\geq 4$ for the $\mathbb{\xi }_{i}$'s$).$

\begin{corollary}[Donsker-Wong-Zakai type convergence]
Assume $V=\left( V_{1},\dots ,V_{d}\right) $ is a collection of $C^{3}$%
-bounded vector fields on $\mathbb{R}^{e}$ and let $\left( Y^{n}\right) $
denote the family of (random) ODE solutions to 
\begin{equation*}
dY^{n}=V\left( Y^{n}\right) dW^{\left( n\right) },\,\,\,Y_{0}=y_{0}\in 
\mathbb{R}^{e}.
\end{equation*}
Then, with $\alpha $ as in (\ref{alphaRange}), 
\begin{equation*}
Y^{n}\implies Y\text{ in }C^{\alpha \text{-H\"{o}l}}\left( \left[ 0,1\right]
,\mathbb{R}^{e}\right)
\end{equation*}
where $Y$ is the (up to indistinguishability) unique continuous solution to
the Stratonovich SDE 
\begin{equation*}
dY=V\left( Y\right) \circ dB,\,\,\,Y_{0}=y_{0}\in \mathbb{R}^{e}.
\end{equation*}
\end{corollary}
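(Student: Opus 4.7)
The plan is to derive the corollary as a direct consequence of Theorem \ref{ThmMainIntro} combined with the continuous mapping theorem, using the continuity of the It\^{o}-Lyons solution map as the central analytic input from rough path theory. Under the hypothesis that $V = \left( V_{1},\dots ,V_{d}\right) $ consists of $C^{3}$-bounded vector fields on $\mathbb{R}^{e}$, the solution map
$$\Phi :C^{0,\alpha \text{-H\"{o}l}}\left( \left[ 0,1\right] ,G^{2}\left( \mathbb{R}^{d}\right) \right) \longrightarrow C^{\alpha \text{-H\"{o}l}}\left( \left[ 0,1\right] ,\mathbb{R}^{e}\right), \qquad \mathbf{x}\mapsto y,$$
that assigns to a geometric $\alpha$-H\"{o}lder rough path $\mathbf{x}$ the unique solution of the rough differential equation $dy=V(y)\,d\mathbf{x}$ with initial value $y_{0}$, is continuous (in fact uniformly continuous on bounded sets) for every $\alpha \in \left( 1/3,1/2\right) $; see \cite{lyons-qian-02, MR2314753}. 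The hypothesis $p\geq 4$ forces $p^{\ast }\geq 4$ and hence $\left( p^{\ast }-1\right) /\left( 2p^{\ast }\right) \geq 3/8>1/3$, so the admissible range for $\alpha $ in (\ref{alphaRange}) contains a non-empty interval above $1/3$, which is precisely what is needed to apply the step-$2$ theory.

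Next I would identify both $Y^{n}$ and $Y$ as outputs of $\Phi $. Since $W^{\left( n\right) }$ is piecewise linear and thus Lipschitz, its canonical step-$2$ lift $S_{2}(W^{\left( n\right) })$ is a smooth rough path, and general rough path theory ensures that $\Phi \left( S_{2}(W^{\left( n\right) })\right) $ coincides pathwise with the classical ODE solution $Y^{n}$. On the other hand, it is a standard fact (see e.g. \cite{lyons-98, MR2314753}) that the RDE solution $\Phi \left( \mathbf{B}\right) $ driven by enhanced Brownian motion is indistinguishable from the unique Stratonovich SDE solution $Y$ of $dY=V\left( Y\right) \circ dB$, $Y_{0}=y_{0}$.

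With these identifications in place, I would apply the continuous mapping theorem. Theorem \ref{ThmMainIntro} supplies $S_{2}(W^{\left( n\right) })\implies \mathbf{B}$ in the Polish space $C^{0,\alpha \text{-H\"{o}l}}\left( \left[ 0,1\right] ,G^{2}\left( \mathbb{R}^{d}\right) \right) $ for each admissible $\alpha $. Since $\Phi $ is continuous, the continuous mapping theorem yields
$$Y^{n}=\Phi \left( S_{2}(W^{\left( n\right) })\right) \implies \Phi \left( \mathbf{B}\right) =Y\quad \text{in }C^{\alpha \text{-H\"{o}l}}\left( \left[ 0,1\right] ,\mathbb{R}^{e}\right) ,$$
which is the desired conclusion.

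I do not anticipate a serious obstacle once Theorem \ref{ThmMainIntro} is in hand; this corollary is essentially a translation exercise. The only subtle point is verifying that the assumed regularity of $V$ is adequate for the continuity of $\Phi $ at the level $\alpha >1/3$: one needs $V\in \mathrm{Lip}^{\gamma }$ with $\gamma >1/\alpha $, so $\gamma >2$ suffices for $\alpha \in \left( 1/3,1/2\right) $, and this is amply provided by $C^{3}$-boundedness. The real content lies in Theorem \ref{ThmMainIntro}, not here.
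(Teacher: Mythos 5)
Your proposal is correct and follows exactly the route the paper intends: the authors derive this corollary from Theorem \ref{ThmMainIntro} via the continuity of the It\^{o}--Lyons solution map on $C^{0,\alpha\text{-H\"{o}l}}\left(\left[0,1\right],G^{2}\left(\mathbb{R}^{d}\right)\right)$ for $\alpha>1/3$, the identification of the RDE solution driven by $\mathbf{B}$ with the Stratonovich solution, and stability of weak convergence under continuous maps. Your additional check that $C^{3}$-boundedness gives the required $\mathrm{Lip}^{\gamma}$ regularity with $\gamma>1/\alpha$ is a correct and welcome detail that the paper leaves implicit.
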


\begin{remark}
The regularity assumptions of the $\left( V_{i}\right) $ can be slightly
weakened. One can also add a drift vector field (only assumed $C^{1}$%
-bounded say) and in fact the weak convergence can be seen to hold in sense
of flows of $C^{1}$-diffeomorphisms (and then $C^{k}$-flows for $k\in 
\mathbb{N}$, provided additional smoothness assumptions are made on $V$).
Indeed, all this follows from the appropriate (deterministic) continuity
results of rough path theory, cf. for instance, combined with stability of
weak convergence under continuous maps.
\end{remark}

\begin{corollary}[Weak convergence to stochastic integrals]
Assume $\varphi =\left( \varphi _{1},\dots ,\varphi _{d}\right) $ is a
collection of $C_{b}^{2}$-bounded functions from $\mathbb{R}^{d}$ to $%
\mathbb{R}^{e}$. Then 
\begin{equation*}
\int_{0}^{\cdot }\varphi \left( W^{\left( n\right) }\right) dW^{\left(
n\right) }\implies \int_{0}^{\cdot }\varphi \left( B\right) \circ dB.
\end{equation*}
\end{corollary}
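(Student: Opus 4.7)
The plan is to realize the map sending the driving signal to its integral against $\varphi$ as a continuous functional of the step-$2$ rough path lift, so that the corollary drops out of Theorem~\ref{ThmMainIntro} together with the continuous mapping theorem for weak convergence.

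First I would fix an exponent $\alpha$ in the range $(1/3,(p^{\ast}-1)/(2p^{\ast}))$ allowed by (\ref{alphaRange}); this range is nonempty because $p\geq 4$ forces $p^{\ast}\geq 4$, hence $(p^{\ast}-1)/(2p^{\ast})\geq 3/8>1/3$. With such an $\alpha$ in hand, I would invoke the standard rough integration theory (see e.g.\ \cite{lyons-qian-02, MR2314753}) which asserts that for $\varphi\in C_b^2$ the rough integration map
\[
\Phi:\mathbf{x}\longmapsto \int_0^{\cdot}\varphi(\pi_1(\mathbf{x}))\,d\mathbf{x}
\]
is well defined and locally Lipschitz continuous from $C^{0,\alpha\text{-H\"ol}}([0,1],G^2(\mathbb{R}^d))$ into $C^{\alpha\text{-H\"ol}}([0,1],\mathbb{R}^e)$.

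The next step is to identify $\Phi$ on the two classes of paths relevant here. Since $W^{(n)}$ is piecewise linear, its canonical lift $S_2(W^{(n)})$ is a smooth rough path, and by consistency of the rough integral with classical Riemann--Stieltjes integration on bounded-variation paths one has
\[
\Phi(S_2(W^{(n)}))=\int_0^{\cdot}\varphi(W^{(n)})\,dW^{(n)}.
\]
On enhanced Brownian motion, the standard identification from Stratonovich calculus, built into the very construction of $\mathbf{B}$, gives
\[
\Phi(\mathbf{B})=\int_0^{\cdot}\varphi(B)\circ dB \quad\text{almost surely.}
\]

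With these two identifications in place, Theorem~\ref{ThmMainIntro} supplies $S_2(W^{(n)})\implies\mathbf{B}$ in $C^{0,\alpha\text{-H\"ol}}([0,1],G^2(\mathbb{R}^d))$, and the continuous mapping theorem applied to the continuous functional $\Phi$ delivers the claimed weak convergence. No step of this argument is really the hard part; the deep content is already packaged inside Theorem~\ref{ThmMainIntro}. The only minor bookkeeping is (i) checking the moment hypothesis $p\geq 4$ indeed leaves room to pick $\alpha>1/3$, which it does with slack, and (ii) quoting the two standard identifications of $\Phi$ correctly from the rough path literature; there is nothing here that should present a genuine obstacle.
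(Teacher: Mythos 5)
Your argument is correct and is precisely the one the paper intends: the corollary is stated as an immediate consequence of Theorem \ref{ThmMainIntro} via the continuity of the rough integration map for $C_b^2$ one-forms on $C^{0,\alpha\text{-H\"ol}}\left( \left[ 0,1\right] ,G^{2}\left( \mathbb{R}^{d}\right) \right)$ with $\alpha>1/3$, the consistency of the rough integral with Riemann--Stieltjes integration on lifted piecewise linear paths and with Stratonovich integration on enhanced Brownian motion, and the stability of weak convergence under continuous maps. Your two bookkeeping points (that $p\geq 4$ leaves room for $\alpha>1/3$, and the two identifications of $\Phi$) are exactly the right things to check, so there is nothing to add.
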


\begin{corollary}[Convergence to BM\ on the free step-$N$ nilpotent group]
\label{Cor3}Assume $N\geq 2$. Then 
\begin{equation*}
S_{N}(W_{t}^{\left( n\right) })\implies \mathbf{\tilde{B}}\text{ in }%
C^{0,\alpha \text{-H\"{o}l}}\left( \left[ 0,1\right] ,G^{N}\left( \mathbb{R}%
^{d}\right) \right)
\end{equation*}%
with $\alpha $ as in (\ref{alphaRange}) and $\mathbf{\tilde{B}}$ a Brownian
motion on $G^{N}\left( \mathbb{R}^{d}\right) $.
\end{corollary}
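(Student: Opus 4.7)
The plan is to deduce Corollary \ref{Cor3} from Theorem \ref{ThmMainIntro} together with the continuity of the Lyons extension map $S_{N}$ (denoted the same as the iterated-integral map because they coincide on smooth paths). Since Theorem \ref{ThmMainIntro} already delivers
\begin{equation*}
S_{2}(W^{(n)})\implies \mathbf{B}\text{ in }C^{0,\alpha\text{-H\"{o}l}}\bigl([0,1],G^{2}(\mathbb{R}^{d})\bigr),
\end{equation*}
the idea is to compose with a map that is continuous in $d_{\alpha\text{-H\"{o}l}}$ and takes us into $C^{0,\alpha\text{-H\"{o}l}}([0,1],G^{N}(\mathbb{R}^{d}))$, then invoke the continuous mapping theorem.

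The concrete steps are as follows. First, recall from the excerpt (the result quoted as \cite[Thm 3.7]{MR2314753}) that the Lyons lift
\begin{equation*}
S_{N}:C^{0,\alpha\text{-H\"{o}l}}\bigl([0,1],G^{2}(\mathbb{R}^{d})\bigr)\longrightarrow C^{0,\alpha\text{-H\"{o}l}}\bigl([0,1],G^{N}(\mathbb{R}^{d})\bigr)
\end{equation*}
is well defined and locally Lipschitz continuous in the respective H\"{o}lder rough path metrics, for any $\alpha>1/3$ (in particular for all $\alpha$ in the range \eqref{alphaRange}). Second, because $W^{(n)}$ is piecewise linear (hence of bounded variation), its level-$N$ iterated integrals are genuine Riemann--Stieltjes integrals and the Lyons lift coincides with the naive iterated-integral lift, i.e.\ $S_{N}(S_{2}(W^{(n)}))=S_{N}(W^{(n)})$ almost surely. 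Third, as noted in the introduction, on the Brownian rough path we have $S_{N}(\mathbf{B})=\mathbf{\tilde{B}}$, which is precisely Brownian motion on $G^{N}(\mathbb{R}^{d})$ (i.e.\ $B$ together with all its iterated Stratonovich integrals up to order $N$).

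Combining these three ingredients with the continuous mapping theorem applied to $S_{N}$ and the weak convergence supplied by Theorem \ref{ThmMainIntro} immediately yields
\begin{equation*}
S_{N}(W^{(n)})=S_{N}\bigl(S_{2}(W^{(n)})\bigr)\implies S_{N}(\mathbf{B})=\mathbf{\tilde{B}}
\end{equation*}
in $C^{0,\alpha\text{-H\"{o}l}}([0,1],G^{N}(\mathbb{R}^{d}))$, which is the desired statement.

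There is essentially no obstacle: the entire content is contained in Theorem \ref{ThmMainIntro}, and Corollary \ref{Cor3} is a purely deterministic consequence. The only place to exercise mild care is the matching of the two meanings of $S_{N}$ (iterated Riemann--Stieltjes integrals of the piecewise-linear path on one hand, Lyons extension of a rough path on the other), but this identification is standard and follows from the uniqueness part of the extension theorem applied to bounded-variation paths.
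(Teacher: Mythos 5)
Your proof is correct and follows essentially the same route the paper intends: Corollary \ref{Cor3} is stated there as an immediate consequence of Theorem \ref{ThmMainIntro}, via the continuity of the Lyons lift $S_{N}$ on $C^{0,\alpha\text{-H\"{o}l}}\left(\left[0,1\right],G^{2}\left(\mathbb{R}^{d}\right)\right)$ for $\alpha>1/3$ (the paper's reference to \cite[Thm 3.7]{MR2314753}) combined with stability of weak convergence under continuous maps. The identifications you spell out --- that $S_{N}\circ S_{2}$ agrees with the iterated Riemann--Stieltjes lift on the piecewise linear paths $W^{(n)}$, and that $S_{N}(\mathbf{B})$ is Brownian motion on $G^{N}\left(\mathbb{R}^{d}\right)$ --- are exactly the ones the paper invokes in its introduction, so nothing is missing.
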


\begin{acknowledgement}
The second author is partially supported by a Leverhulme Research Fellowship
and EPSRC grant EP/E048609/1.
\end{acknowledgement}

\section{Donsker's theorem for enhanced Brownian motion and random walks on
groups}

We first discuss the case of a random walk for finite moments of all orders.

\begin{theorem}[Donsker's theorem for enhanced Brownian motion]
\bigskip Assume $\mathbb{E\xi }=0$ and $\mathbb{E}\left( \left| \xi \right|
^{p}\right) <\infty $ for all $p\in [1,\infty )$ and $\alpha <1/2$. Then 
\begin{equation*}
S_{2}(W_{\cdot }^{\left( n\right) })\implies \mathbf{B}\text{ in }%
C^{0,\alpha \text{-H\"{o}l}}\left( \left[ 0,1\right] ,G^{2}\left( \mathbb{R}%
^{d}\right) \right)
\end{equation*}
where $\mathbf{B}$ is a ($G^{2}\left( \mathbb{R}^{d}\right) $-valued)
enhanced Brownian motion.
\end{theorem}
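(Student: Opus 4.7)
The plan is to combine identification of the limit in the uniform topology with tightness in the stronger $\alpha$-H\"{o}lder rough path topology. For the identification step, I would directly invoke the Stroock--Varadhan invariance principle for random walks on nilpotent Lie groups (as cited in the introduction), which yields $S_2(W^{(n)})\implies \mathbf{B}$ in the uniform topology on $C([0,1], G^2(\mathbb{R}^d))$. Combined with tightness of $\{S_2(W^{(n)})\}_{n \geq 1}$ in $C^{0,\alpha\text{-H\"{o}l}}([0,1], G^2(\mathbb{R}^d))$ for every $\alpha < 1/2$, this will deliver weak convergence in the claimed topology.

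For the tightness, I would use a Kolmogorov-type criterion for group-valued paths, which reduces the task to establishing
$$\sup_n \mathbb{E}\!\left[d\!\left(S_2(W^{(n)})_s, S_2(W^{(n)})_t\right)^q\right] \leq C_q |t-s|^{q/2}$$
for every integer $q \geq 2$, every $0 \leq s < t \leq 1$, and some constant $C_q$ independent of $n$. Such a bound yields tightness in the $\alpha$-H\"{o}lder topology for every $\alpha < 1/2 - 1/q$. Since $\xi$ has finite moments of all orders by assumption, $q$ can be taken arbitrarily large, and one thereby covers every $\alpha < 1/2$.

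To prove the moment bound, I would use that the Carnot--Carath\'{e}odory distance is comparable to $|W^{(n)}_t - W^{(n)}_s| + |A^{(n)}_{s,t}|^{1/2}$, where $A^{(n)}_{s,t}$ is the discrete L\'{e}vy area increment, and then estimate both terms separately. For $s=k/n$ and $t=m/n$, the level-one increment is a scaled sum of $m-k$ i.i.d.\ centered variables with moments of all orders, and Rosenthal's inequality produces $\mathbb{E}[|W^{(n)}_t-W^{(n)}_s|^q] \leq C_q |t-s|^{q/2}$. The area increment is, up to a factor $1/n$, a double sum of the form
$$\sum_{k<i<j\leq m} \bigl(\xi_i \otimes \xi_j - \xi_j \otimes \xi_i\bigr),$$
which is a discrete martingale in the upper summation index $j$. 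Applying Burkholder--Davis--Gundy and then a Rosenthal-type estimate to the quadratic variation gives $\mathbb{E}[|A^{(n)}_{s,t}|^{q/2}] \leq C_q |t-s|^{q/2}$. For non-grid times one reduces to the grid case by handling the (at most two) fractional endpoints via the explicit piecewise-linear structure of $W^{(n)}$.

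The main obstacle is verifying that the moment bound on the discrete L\'{e}vy area holds \emph{uniformly in $n$} and at \emph{all} scales $|t-s|$, including those comparable to the mesh $1/n$, while tracking the correct dependence on $q$. The large-scale regime follows cleanly from martingale moment inequalities, but one must check that the combinatorial double sum retains the right $|t-s|^{q/2}$ scaling after passing through Rosenthal-type bounds on its quadratic variation. Once the moment estimate is in place, the Kolmogorov criterion yields tightness, and combining it with the Stroock--Varadhan identification of the limit completes the proof.
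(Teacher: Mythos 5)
Your proposal is correct and shares the overall skeleton of the paper's argument: identification of the limit (the paper does this via convergence of finite-dimensional distributions using Wehn's central limit theorem on nilpotent Lie groups, noting it also follows from Stroock--Varadhan, which is what you invoke) combined with tightness via a Kolmogorov--Lamperti criterion for $\left( G^{2}\left( \mathbb{R}^{d}\right) ,d\right) $-valued paths, with exactly the same bookkeeping $\gamma <b/a=1/2-1/q$. Where you genuinely diverge is in the proof of the key moment estimate. The paper never touches the L\'{e}vy area as a double sum: it introduces the operator $TP:g\mapsto \mathbb{E}\left( P\left( g\otimes \mathbf{\xi }\right) \right) -P\left( g\right) $ acting on polynomials in the log-chart of $G^{2}(\mathbb{R}^{d})$, shows via Campbell--Baker--Hausdorff that $T$ lowers the homogeneous degree by $2$ when $\mathbf{\xi }$ is centered, and then reads off $\mathbb{E}\left[ P(\mathbf{\xi }_{1}\otimes \dots \otimes \mathbf{\xi }_{k})\right] =(T+1)^{k}P(1)=\sum_{l=0}^{2p}\binom{k}{l}T^{l}P(1)=O(k^{2p})$ for the degree-$4p$ polynomial comparable to $\left\Vert \cdot \right\Vert ^{4p}$. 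You instead split the Carnot--Caratheodory norm into the level-one increment and the square root of the antisymmetric double sum, and estimate the latter as a martingale in the upper summation index via Burkholder--Davis--Gundy and Rosenthal; this does close (the quadratic variation estimate gives $O((m-k)^{q/2})$ after rescaling by $n^{-q/2}$, exactly the $|t-s|^{q/2}$ you need, and the martingale property follows from centering and independence). The trade-off: the paper's algebraic argument is insensitive to the level of nilpotency and to whether the increments are lifts of $\mathbb{R}^{d}$-valued variables, which is what lets it run verbatim for $G^{N}\left( \mathbb{R}^{d}\right) $-valued walks in Section 4, at the price of being confined to even integer powers (the source of the coarsened exponent $p^{\ast }$); your martingale computation is more elementary and in principle handles non-integer moments, but it is wedded to the explicit level-$2$ area structure and would require genuinely more work (iterated martingale decompositions of triple and higher sums) to reach step $N\geq 3$.
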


In fact, we shall prove a more general theorem that deals with random walks
on groups. More precisely, by a theorem of Chen \cite{chen-57} we have 
\begin{equation*}
S_{2}\left( W^{\left( n\right) }\right) _{t}=\delta _{n^{-1/2}}\left( e^{\xi
_{1}}\otimes \dots \otimes e^{\xi _{\left[ nt\right] }}\otimes e^{\left( nt-%
\left[ nt\right] \right) \xi _{\left[ nt\right] +1}}\right)
\end{equation*}%
where $\delta $ denotes dilation on $G^{2}\left( \mathbb{R}^{d}\right) $ and 
$e^{v}=\left( 1,v,v^{\otimes 2}/2\right) \in G^{2}\left( \mathbb{R}%
^{d}\right) ,$ $v\in \mathbb{R}^{d}$. Observe that $\left( \mathbf{\xi }%
_{i}\right) :=\left( e^{\xi _{i}}\right) $ is a sequence of independent,
identically distributed $G^{2}\left( \mathbb{R}^{d}\right) $-valued random
variables, \textit{centered }in the sense that 
\begin{equation*}
\mathbb{E}\left( \pi _{1}\left( \mathbf{\xi }_{i}\right) \right) =\mathbb{E}%
\xi _{i}=0,
\end{equation*}%
where $\pi _{1}$ is the projection from $G^{2}\left( \mathbb{R}^{d}\right)
\rightarrow \mathbb{R}^{d}$. Let us also observe that the shortest path
which connects the unit element $1\in $ $G^{2}\left( \mathbb{R}^{d}\right) $
with $e^{\xi _{i}}$ is precisely $e^{t\xi _{i}}$ so that piecewise linear
interpolation on $\mathbb{R}^{d}$ lifts to geodesic interpolation on $%
G^{2}\left( \mathbb{R}^{d}\right) $. We shall thus focus on the following
Donsker-type theorem.

\begin{theorem}
\label{Donskertype}Let $\left( \mathbf{\xi }_{i}\right) $ be a centered IID
sequence of $G^{2}\left( \mathbb{R}^{d}\right) $-valued random variables
with finite moments of all orders, 
\begin{equation*}
\forall q\in [1,\infty ):E\left( \left\| \mathbf{\xi }_{i}\right\|
^{q}\right) <\infty
\end{equation*}
and consider the rescaled random walk defined by $\mathbf{W}_{0}^{\left(
n\right) }=1$ and\bigskip 
\begin{equation*}
\mathbf{W}_{t}^{\left( n\right) }=\delta _{n^{-1/2}}\left( \mathbf{\xi }%
_{1}\otimes \dots \otimes \mathbf{\xi }_{\left[ tn\right] }\right)
\end{equation*}
for $t\in \left\{ 0,\frac{1}{n},\frac{2}{n},\dots \right\} $,
piecewise-geodesically-connected in between (i.e. $\mathbf{W}_{t}^{\left(
n\right) }|_{\left[ \frac{i}{n},\frac{i+1}{n}\right] }$ is a geodesic
connecting $\mathbf{W}_{i/n}^{\left( n\right) }$ and $\mathbf{W}_{\left(
i+1\right) /n}^{\left( n\right) }$). Then, for any $\alpha <1/2$, $\mathbf{W}%
^{\left( n\right) }$ converges weakly to $\mathbf{B}$, in $C^{0,\alpha \text{%
-H\"{o}l}}\left( \left[ 0,1\right] ,G^{2}\left( \mathbb{R}^{d}\right)
\right) .$
\end{theorem}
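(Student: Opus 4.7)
The plan is the classical two-step strategy for weak convergence on the Polish space $C^{0,\alpha\text{-H\"{o}l}}\left( \left[ 0,1\right] ,G^{2}\left( \mathbb{R}^{d}\right) \right) $: first establish tightness of the laws of $\mathbf{W}^{\left( n\right) }$, then identify any subsequential limit with enhanced Brownian motion $\mathbf{B}$.

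Identification is the easier half. Since $\alpha $-H\"{o}lder convergence is stronger than uniform convergence, it is enough to show that the finite-dimensional marginals of $\mathbf{W}^{\left( n\right) }$ converge to those of $\mathbf{B}$. At fixed $t$, $\mathbf{W}_{t}^{\left( n\right) }$ is a rescaled $\left[ tn\right] $-fold convolution power on the nilpotent group $G^{2}\left( \mathbb{R}^{d}\right) $ of the centered step distribution $\mathbf{\xi }$, and the group-valued central limit theorem of Stroock--Varadhan \cite{MR0517406} (cf.\ also \cite{MR1009453}) then produces convergence to the hypoelliptic diffusion on $G^{2}\left( \mathbb{R}^{d}\right) $ with generator the sub-Laplacian built from the covariance of $\pi _{1}\left( \mathbf{\xi }\right) $ and the mean of $\pi _{2}\left( \mathbf{\xi }\right) $ --- this being precisely the law of $\mathbf{B}_{t}$. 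Joint distributions at several times reduce to this via the independent-increments property applied to disjoint blocks.

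The heart of the argument is tightness. I would apply a Kolmogorov--Lamperti-type criterion adapted to $G^{2}\left( \mathbb{R}^{d}\right) $-valued paths, which reduces matters to proving a uniform moment bound
\begin{equation*}
\sup_{n\in \mathbb{N}}\,\mathbb{E}\left[ d\left( \mathbf{W}_{s}^{\left( n\right) },\mathbf{W}_{t}^{\left( n\right) }\right) ^{q}\right] \leq C_{q}\left\vert t-s\right\vert ^{q/2},\qquad s,t\in \left[ 0,1\right] ,
\end{equation*}
for every $q\in \left[ 1,\infty \right) $; taking $q$ arbitrarily large then yields tightness in $\alpha $-H\"{o}lder for every $\alpha <1/2$. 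The Ball--Box estimate $d\left( \mathbf{1},\mathbf{x}\right) \asymp \left\vert \pi _{1}\left( \mathbf{x}\right) \right\vert +\left\vert \pi _{2}\left( \mathbf{x}\right) \right\vert ^{1/2}$ splits this into separate $L^{q}$- and $L^{q/2}$-bounds on the two tensor levels of the increment $\mathbf{W}_{s,t}^{\left( n\right) }$. Geodesic sub-grid interpolation adds only one extra rescaled step $\delta _{n^{-1/2}}\mathbf{\xi }_{k}$, whose moments are harmless, so it is enough to treat grid times $s=i/n$, $t=j/n$, at which Chen's formula gives $\mathbf{W}_{s,t}^{\left( n\right) }=\delta _{n^{-1/2}}\left( \mathbf{\xi }_{i+1}\otimes \dots \otimes \mathbf{\xi }_{j}\right) $. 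The level-one estimate follows from Rosenthal's inequality for the centered partial sum $\sum_{k=i+1}^{j}\pi _{1}\left( \mathbf{\xi }_{k}\right) $. At level two, the expansion $\pi _{2}\left( \mathbf{\xi }_{i+1}\otimes \dots \otimes \mathbf{\xi }_{j}\right) =\sum_{k}\pi _{2}\left( \mathbf{\xi }_{k}\right) +\sum_{i<k<l\leq j}\pi _{1}\left( \mathbf{\xi }_{k}\right) \otimes \pi _{1}\left( \mathbf{\xi }_{l}\right) $ splits into (a) a symmetric combination equal to $\tfrac{1}{2}M_{i,j}^{\otimes 2}$ plus a centered i.i.d.\ sum at level two (with $M_{i,j}:=\sum_{k=i+1}^{j}\pi _{1}\left( \mathbf{\xi }_{k}\right) $), and (b) the antisymmetric discrete L\'{e}vy area $A_{i,j}$, which is a martingale in $j$ with differences $M_{i,k-1}\otimes \pi _{1}\left( \mathbf{\xi }_{k}\right) -\pi _{1}\left( \mathbf{\xi }_{k}\right) \otimes M_{i,k-1}$. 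Piece (a) is controlled by Rosenthal plus the level-one bound; piece (b) is controlled by Burkholder--Davis--Gundy followed by a second Rosenthal applied to the $M_{i,k-1}$'s, yielding $\mathbb{E}\left[ \left\vert A_{i,j}\right\vert ^{q/2}\right] \leq C_{q}\left( j-i\right) ^{q/2}$; after the dilation contracts the second level by $n^{-1}$ this becomes the required $\left\vert t-s\right\vert ^{q/2}$.

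The main obstacle is precisely this sharp area bound: a crude pointwise estimate $\left\vert A_{i,j}\right\vert \leq 2\left\vert M_{i,j}\right\vert ^{2}$ would lose a factor of $\left( j-i\right) ^{q/2}$ in moments and force $\alpha \leq 1/4$, so one must genuinely exploit the martingale structure through Burkholder--Davis--Gundy \emph{before} invoking Rosenthal in order to obtain scaling consistent with $\alpha $ arbitrarily close to $1/2$. It is the all-moments hypothesis that permits $q$ to be taken arbitrarily large, thereby absorbing the $1/q$-loss inherent in the Kolmogorov criterion. Once tightness is in hand, combining it with the finite-dimensional identification closes the argument.
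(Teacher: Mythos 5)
Your proposal is correct, and its overall architecture --- finite-dimensional convergence via the central limit theorem for centered i.i.d.\ variables on a nilpotent group (Wehn, Stroock--Varadhan), plus Kolmogorov tightness in H\"{o}lder metric reduced, by geodesic interpolation and stationarity of increments, to the grid-point moment bound $\mathbb{E}\left[ \left\Vert \mathbf{\xi }_{1}\otimes \dots \otimes \mathbf{\xi }_{k}\right\Vert ^{2q}\right] =O\left( k^{q}\right) $ for $q$ arbitrarily large --- is exactly the paper's. Where you genuinely diverge is in the proof of that moment bound. You split the Carnot--Caratheodory norm by tensor levels via the ball--box equivalence and estimate each level probabilistically: Rosenthal at level one, and at level two a decomposition into the square of the level-one sum, a plain i.i.d.\ sum, and the discrete L\'{e}vy area, the last controlled as a martingale by Burkholder--Davis--Gundy followed by a second application of Rosenthal. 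The paper instead runs a purely algebraic argument: it introduces the transfer operator $TP:g\mapsto \mathbb{E}\left( P\left( g\otimes \mathbf{\xi }\right) \right) -P\left( g\right) $ on polynomial functions in the log-chart, shows via Campbell--Baker--Hausdorff that $T$ lowers the homogeneous degree by two when $\mathbf{\xi }$ is centered, and then computes $\mathbb{E}\left[ P(\mathbf{\xi }_{1}\otimes \dots \otimes \mathbf{\xi }_{k})\right] =(T+1)^{k}P(1)=\sum_{l=0}^{2p}\binom{k}{l}T^{l}P(1)=O(k^{2p})$ for a degree-$4p$ polynomial comparable to $\left\Vert \cdot \right\Vert ^{4p}$. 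Your route is more hands-on and isolates precisely where the martingale structure of the area enters (your observation that the crude bound $\left\vert A_{i,j}\right\vert \leq 2\left\vert M_{i,j}\right\vert ^{2}$ would cap the exponent at $1/4$ is the right point); it also works for non-integer moment exponents, which would sidestep the integer coarsening $p\mapsto p^{\ast }$ the paper incurs in Section \ref{SecFiniteMoments}. What the paper's operator argument buys is uniformity over the nilpotency step: it extends verbatim to $G^{N}\left( \mathbb{R}^{d}\right) $ for all $N$ (as exploited in the Proposition of Section \ref{SecFiniteMoments}), whereas your level-by-level decomposition would need increasingly elaborate iterated martingale estimates beyond level two. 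One small inaccuracy: the leftover level-two i.i.d.\ sum $\sum_{k}\left( \pi _{2}\left( \mathbf{\xi }_{k}\right) -\tfrac{1}{2}\pi _{1}\left( \mathbf{\xi }_{k}\right) ^{\otimes 2}\right) $ need not be centered for a general centered $G^{2}$-valued step distribution, but the triangle inequality already gives the required $O\left( k\right) $ bound in $L^{q/2}$, so nothing is lost.
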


\section{Proof of theorem \protect\ref{Donskertype}}

Following a standard pattern of proof, weak convergence follows from
convergence of the finite-dimensional-distributions and tightness (here in $%
\alpha $-H\"{o}lder topology).

\underline{Step 1: } (Convergence of the finite-dimensional-distributions)
This is an immediate consequence of the :

\begin{theorem}
(\textit{Central limit theorem for centered i.i.d variables on a nilpotent
Lie group}) Let $N$ be a simply connected nilpotent Lie group and $\mathbf{%
\xi }_{1},...,\mathbf{\xi }_{n},...$ be i.i.d. random variables with values
in $N$ which we assume centered (i.e. their projection $\pi _{1}(\mathbf{\xi 
}_{i})$ on the abelianization of $N$ has zero mean) and with a finite second
moment, i.e. $E\left( \left\| \mathbf{\xi }_{i}\right\| ^{2}\right) <\infty
. $ Then we have the following convergence in law: 
\begin{equation*}
\delta _{n^{-1/2}}\left( \mathbf{\xi }_{1}\otimes \dots \otimes \mathbf{\xi }%
_{n}\right) \rightarrow \mathbf{B}_{1}
\end{equation*}
where $\mathbf{B}_{1}$ is the time $1$ value of the Brownian motion on $N$
associated to $\mathbf{\xi }_{1}$ (i.e. the symmetric diffusion on $N$ with
infinitesimal generator the left invariant sub-Laplacian $\frac{1}{2}\sum
a_{ij}X_{i}X_{j}$ where $(a_{ij})$ is the covariance matrix of $\pi _{1}(\xi
_{i})$).
\end{theorem}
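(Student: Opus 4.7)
The plan is to identify $N$ with its Lie algebra $\mathfrak{n}$ via the global diffeomorphism $\exp$, and to exploit the Baker--Campbell--Hausdorff (BCH) formula: on a simply-connected $s$-step nilpotent group, BCH expresses $\log(e^{X_1} \otimes \cdots \otimes e^{X_n})$ as a fixed universal polynomial of degree $\leq s$ in $X_1,\ldots,X_n$ whose monomials are iterated Lie brackets. Writing $\mathbf{\xi}_i = \exp(\mathbf{X}_i)$, and decomposing $\mathfrak{n} = \mathfrak{n}_1 \oplus \cdots \oplus \mathfrak{n}_s$ along the grading under which the dilation $\delta_\lambda$ acts by $\lambda^k$ on $\mathfrak{n}_k$, the centering hypothesis reads $\mathbb{E}[\pi_1(\mathbf{X}_i)] = 0$.

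After applying $\delta_{n^{-1/2}}$, the degree-$k$ component of $\log(\mathbf{W}_1^{(n)})$ is divided by $n^{k/2}$. The degree-$1$ component is $n^{-1/2} \sum_{i=1}^n \pi_1(\mathbf{X}_i)$, which converges in law to a centered Gaussian with the prescribed covariance by the classical finite-dimensional CLT (using zero mean and finite second moment). For $k \geq 2$, the degree-$k$ piece is a multilinear expression of the form $\sum_{i_1 < \cdots < i_k} c_{i_1 \cdots i_k} [\mathbf{X}_{i_1},[\mathbf{X}_{i_2},\ldots,\mathbf{X}_{i_k}]]$ (plus contributions from tuples with repeated indices, whose combinatorial count is subdominant and which therefore vanish in the $n^{-k/2}$ scaling). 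These ordered multilinear sums are discrete martingale transforms; a discrete It\^{o}-type argument shows that they converge, jointly with the degree-$1$ piece, to the corresponding iterated Stratonovich integrals of the limiting Brownian motion on $\mathfrak{n}_1$, yielding exactly the BM $\mathbf{B}_1$ on $N$.

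I would carry out the analysis by induction on the nilpotency step $s$. The base case $s=1$ reduces to the classical $\mathbb{R}^d$-valued CLT. For the inductive step, let $Z$ be the center of $N$: then $N/Z$ is simply-connected nilpotent of step $s-1$, the inductive hypothesis applies to the projected walk $\bar{\mathbf{W}}^{(n)}_1 \in N/Z$, and the only new content is the identification of the central $Z$-valued component as the correct top-degree iterated stochastic integral.

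The main obstacle is precisely this joint identification: proving that the higher-degree multilinear sums converge \emph{jointly} with (not independently of) the linear CLT part, to iterated Stratonovich integrals of a single underlying Brownian motion. This is a discrete-time functional CLT for multiple stochastic integrals. A quicker, less self-contained, alternative is to invoke directly the functional invariance principle of Stroock--Varadhan \cite{MR0517406} already cited above, and read off the time-$1$ marginal.
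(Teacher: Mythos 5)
The first thing to say is that the paper does not actually prove this theorem: its entire ``proof'' is the sentence following the statement, which attributes the result to Wehn's 1962 thesis (cf.\ \cite{MR0206994}, \cite[Thm 3.11]{breuillard-07}, \cite{MR507730}) and notes that it also follows from the Stroock--Varadhan invariance principle on Lie groups \cite{MR0517406}. Your closing fallback --- invoke \cite{MR0517406} and read off the time-$1$ marginal --- is therefore exactly the route the authors take, and is a legitimate way to discharge the statement.

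Your direct BCH sketch is a genuinely different and more self-contained approach (it is, in spirit, how Wehn-type theorems are actually proved), but as written it has two concrete problems beyond the obstacle you already name. First, the claim that tuples with repeated indices are ``subdominant and therefore vanish'' is false already in degree $2$: the BCH expansion of $\log(\mathbf{\xi}_1\otimes\dots\otimes\mathbf{\xi}_n)$ at level $2$ contains the single-index sum $\sum_{i\le n}\pi_2(\log\mathbf{\xi}_i)$, which after the $\delta_{n^{-1/2}}$ scaling is divided by $n$, not $n^{1/2}$, and hence converges by the law of large numbers to $\mathbb{E}\,\pi_2(\log\mathbf{\xi}_1)$ --- a nonvanishing deterministic drift in general, since the centering hypothesis constrains only $\pi_1$. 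This drift is precisely the first-order term that appears in Wehn's generator and must be tracked, not discarded (it vanishes in the paper's application only because there $\log\mathbf{\xi}_i=\xi_i$ lies in the first level). Second, the hypothesis is only $\mathbb{E}\|\mathbf{\xi}_i\|^2<\infty$ for the homogeneous (Carnot--Carath\'eodory) norm, which gives merely a finite moment of order $2/k$ for the level-$k$ component of $\log\mathbf{\xi}_i$; so for step $\ge 3$ the $L^2$ or $L^1$ estimates you would need for the repeated-index and martingale-transform terms are not available by naive bounds, and some truncation argument is required. Together with the unproved joint convergence of the off-diagonal multilinear sums to iterated stochastic integrals, this means the direct route, while plausible in outline, is not yet a proof; the citation route is the one that actually closes the argument here.
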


This theorem is a straightforward consequence of the main result of Wehn's
(unpublished) 1962 thesis, cf. \cite{MR0206994}, \cite[Thm 3.11]%
{breuillard-07} or \cite{MR507730}. It also follows a fortiori from the much
stronger Stroock-Varadhan Donsker-type theorem in connected Lie groups \cite%
{MR0517406}.

\underline{Step 2:} (Tightness) We need to find positive constants $a,b,c$
such that for all $u,v\in \left[ 0,1\right] $%
\begin{equation*}
\sup_{n}\mathbb{E}\left[ d\left( \mathbf{W}_{v}^{\left( n\right) },\mathbf{W}%
_{u}^{\left( n\right) }\right) ^{a}\right] \leq c\left\vert v-u\right\vert
^{1+b},
\end{equation*}%
then we can apply Kolmogorov's tightness criterion\footnote{%
E.g. \cite{revuz-yor-1999}; the extension from the real-valued processes to $%
\left( G^{2}\left( \mathbb{R}^{d}\right) ,d\right) $-valued processes, with $%
d$ being the Carnot-Caratheodory metric, is trivial.} to obtain tightness
in\ $\gamma $-H\"{o}lder topology, for any $\gamma <b/a$. Using basic
properties of geodesic interpolation, we see that it is enough to consider $%
u,v\in \left\{ 0,\frac{1}{n},\frac{2}{n},\dots \right\} $ and then (by
independence of increments and left invariance of $d$) there is no loss of
generality in taking $\left[ u,v\right] =\left[ 0,k/n\right] $ for some $%
k\in \left\{ 0,\dots ,n\right\} $. It follows that what has to be
established reads 
\begin{equation*}
\exists a,b,c_{1}:\frac{1}{n^{a/2}}E\left[ \left\Vert \mathbf{\xi }%
_{1}\otimes \dots \otimes \mathbf{\xi }_{k}\right\Vert ^{a}\right] \leq
c_{1}\left\vert \frac{k}{n}\right\vert ^{1+b},
\end{equation*}%
uniformly over all $n\in \mathbb{N}$ and $0\leq k\leq n$, and such that $b/a$
can be taken arbitrarily close to $1/2$. To this end, it is enough to show
that for all $p\in \left\{ 1,2,\dots \right\} $%
\begin{equation*}
\left( \ast \right) :\mathbb{E}\left[ \left\Vert \mathbf{\xi }_{1}\otimes
\dots \otimes \mathbf{\xi }_{k}\right\Vert ^{4p}\right] =O\left(
k^{2p}\right)
\end{equation*}%
since we can then take $a=4p,b=2p-1$ and of course $b/a=\left( 2p-1\right)
/\left( 4p\right) \uparrow 1/2$ as $p\uparrow \infty $. Thus, the proof is
finished once we show $\left( \ast \right) $ and this is the content of the
last step of this proof.

\underline{Step 3:} Let $P$ be a polynomial function on $G^{2}(\mathbb{R}%
^{d}),$ i.e. a polynomial in $a^{1;i},a^{2;ij}$ where 
\begin{equation*}
a=\left( a^{1;i},a^{2;ij};1\leq i\leq d,1\leq i<j\leq d\right) \in \mathfrak{%
g}^{2}\left( \mathbb{R}^{d}\right)
\end{equation*}%
is the log-chart of $G^{2}(\mathbb{R}^{d})$, $g\mapsto a=\log \left(
g\right) $. We define the \textit{degree} $d^{\circ }P$ by agreeing that
monomials of form 
\begin{equation*}
\left( a^{1;i}\right) ^{\alpha _{i}}\left( a^{2;ij}\right) ^{\alpha _{i,j}}
\end{equation*}%
have degree $\sum \alpha _{i}+2\sum \alpha _{i,j}$. If $\mathbf{\xi }$ is a $%
G^{2}(\mathbb{R}^{d})$-valued random variable with moments of all orders,
then 
\begin{equation*}
TP:g\mapsto \mathbb{E}\left( P\left( g\otimes \mathbf{\xi }\right) \right)
-P\left( g\right)
\end{equation*}%
is well defined and is another polynomial function. Moreover if $\mathbf{\xi 
}$ is centered, an easy application of the Campbell-Baker-Hausdorff formula
reveals that $TP$ is of degree $\leq d^{o}P-2$. (For instance, $P\left(
a\right) :=\left( a^{2;ij}\right) ^{m}$ has degree $2m$; then $TP$ is seen
to contain terms of the form $\left( a^{2;ij}\right) ^{m-1}$ and $\left(
a^{2;ij}\right) ^{m-2}\left( a^{1;k}\right) ^{2}$ etc. all of which are
indeed of degree $2m-2$). Now, for any $p\in \left\{ 1,2,\dots \right\} $, 
\begin{eqnarray*}
\left\Vert e^{a}\right\Vert ^{4p} &\sim &\sum_{i}\left\vert
a^{1;i}\right\vert ^{4p}+\sum_{i<j}\left\vert a^{1;ij}\right\vert ^{2p} \\
&=&\sum_{i}\left( a^{1;i}\right) ^{4p}+\sum_{i<j}\left( a^{1;ij}\right)
^{2p}=:P\left( e^{a}\right)
\end{eqnarray*}%
where $P$ is a polynomial of degree $4p$. Recalling the definition of the
operator $T$ and using independence we have, 
\begin{eqnarray*}
\mathbb{E}\left[ P(\mathbf{\xi }_{1}\otimes \dots \otimes \mathbf{\xi }_{k})%
\right] &=&\mathbb{E}\left[ \mathbb{E}\left[ P((\mathbf{\xi }_{1}\otimes
\dots \otimes \mathbf{\xi }_{k-1})\otimes \mathbf{\xi }_{k})\mid \mathbf{\xi 
}_{1},...,\mathbf{\xi }_{k-1}\right] \right] \\
&=&\mathbb{E}\left[ TP(\mathbf{\xi }_{1}\otimes \dots \otimes \mathbf{\xi }%
_{k-1})+P(\mathbf{\xi }_{1}\otimes \dots \otimes \mathbf{\xi }_{k-1})\right]
\\
&=&...= \\
&=&(T+1)^{k}P(1) \\
&=&\sum_{l\geq 0}\binom{k}{l}T^{l}P(1)
\end{eqnarray*}%
But the function $TP:g\mapsto \mathbb{E}(P(g\otimes \mathbf{\xi }))-P(g)$ is
a polynomial function of degree at most $d^{\circ }P-2=4p-2$. Hence 
\begin{equation*}
d^{\circ }T^{l}P\leq d^{\circ }P-2l=2\left( 2p-l\right)
\end{equation*}%
and the above sum contains only a finite number of terms, more precisely 
\begin{equation*}
\mathbb{E}\left[ P(\mathbf{\xi }_{1}\otimes \dots \otimes \mathbf{\xi }_{k})%
\right] =\sum_{l=0}^{2p}\binom{k}{l}T^{l}P(1).
\end{equation*}%
Since each of these terms is $O(k^{2p})$, as $k\rightarrow \infty $, we are
done.

\section{Extension to finite moments\label{SecFiniteMoments}}

\bigskip The question remains what happens if we weaken the moment
assumption to 
\begin{equation*}
\mathbb{E}\left( \left\Vert \mathbf{\xi }_{i}\right\Vert ^{2p}\right)
<\infty \text{ for some}\,\,\,\,p>1.
\end{equation*}%
where, for now, $\mathbf{\xi }_{i}\in G^{2}\left( \mathbb{R}^{d}\right) $.
If we could prove that 
\begin{equation}
\mathbb{E}\left[ \left\Vert \mathbf{\xi }_{1}\otimes \dots \otimes \mathbf{%
\xi }_{k}\right\Vert ^{2p}\right] =O\left( k^{p}\right)
\label{WishfulThinking}
\end{equation}%
then the arguments of the previous section apply line-by-line to obtain
tightness (and hence weak convergence)\ in $C^{0,\gamma \text{-H\"{o}l}%
}\left( \left[ 0,1\right] ,G^{2}\left( \mathbb{R}^{d}\right) \right) $ , for
any 
\begin{equation*}
\gamma <\frac{p-1}{2p}.
\end{equation*}%
In the case that $\frac{p-1}{2p}>1/3$ we can and will choose $\gamma \in
\left( 1/3,1/2\right) $ since then $C^{0,\gamma \text{-H\"{o}l}}\left( \left[
0,1\right] ,G^{2}\left( \mathbb{R}^{d}\right) \right) $ is a genuine rough
path space\footnote{%
The integer part of $1/\gamma $ must match the level of nilpotency: $\left[
1/\gamma \right] =2$.}. Otherwise, i.e. if $\gamma <\frac{p-1}{2p}\leq 1/3$,
tightness in $C^{0,\gamma \text{-H\"{o}l}}\left( \left[ 0,1\right]
,G^{2}\left( \mathbb{R}^{d}\right) \right) $ is worthless (from the point of
view of rough path applications). However, we can still ask for the smallest
integer $N$ such that 
\begin{equation*}
\frac{p-1}{2p}>\frac{1}{N+1}
\end{equation*}%
and consider $G^{N}\left( \mathbb{R}^{d}\right) $-valued random variables $%
\mathbf{\xi }_{i}$ with finite $\left( 2p\right) $-moments. Again, if we
could show that (\ref{WishfulThinking}) holds, all arguments extend and we
would obtain tightness in $C^{0,\gamma \text{-H\"{o}l}}\left( \left[ 0,1%
\right] ,G^{N}\left( \mathbb{R}^{d}\right) \right) $ where $\gamma $ can be
chosen to be in $\left( \frac{1}{N+1},\frac{1}{N}\right) $ so that we have
tightness in a \textquotedblright step-$N$ rough path
topology\textquotedblright . Unfortunately, the \textquotedblright
polynomial\textquotedblright\ proof of the previous section does not allow
to obtain (\ref{WishfulThinking}) but only the following slightly weaker
result.

\begin{proposition}
Let $\left( \mathbf{\xi }_{i}\right) $ be a centered IID sequence of $%
G^{N}\left( \mathbb{R}^{d}\right) $-valued random variables with finite $%
\left( 2p\right) $-moments, $p$ real $\,p>1$%
\begin{equation}
E\left( \left\| \mathbf{\xi }_{i}\right\| ^{2p}\right) <\infty .  \label{mom}
\end{equation}
Then 
\begin{equation}
\mathbb{E}\left[ \left\| \mathbf{\xi }_{1}\otimes \dots \otimes \mathbf{\xi }%
_{k}\right\| ^{2q}\right] =O\left( k^{q}\right)  \label{PropOkp}
\end{equation}
for all $q\leq q_{0}\left( p,N\right) \leq p$ where $q_{0}\left( p,N\right) $
may be taken to be 
\begin{equation*}
q_{0}\left( p,N\right) =\min_{m=1,\dots ,N}m\left[ \frac{p}{m}\right] .
\end{equation*}
\end{proposition}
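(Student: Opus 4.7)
The plan is to mimic the polynomial argument of Step 3 of Section 3 level-by-level and then interpolate. First, from the equivalence of homogeneous norms on $G^{N}(\mathbb{R}^{d})$, for $g = e^{a}$ with $a \in \mathfrak{g}^{N}(\mathbb{R}^{d})$ one has $\|g\| \sim \max_{1 \leq m \leq N}\max_{I}|a^{m;I}|^{1/m}$, so that for any $q > 0$ there is a constant $C_{q}$ with
\[
\|S_{k}\|^{2q} \leq C_{q}\sum_{m=1}^{N}\sum_{I}|a^{m;I}(S_{k})|^{2q/m}, \qquad S_{k} := \mathbf{\xi}_{1}\otimes\cdots\otimes\mathbf{\xi}_{k}.
\]
It therefore suffices to establish $\mathbb{E}[|a^{m;I}(S_{k})|^{2q/m}] = O(k^{q})$ componentwise whenever $q \leq q_{0}(p,N)$.

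For each fixed level $m$ and component $I$, I introduce the polynomial $P_{m,I}(g) := (a^{m;I})^{2\lfloor p/m\rfloor}$, which in the weighted grading of Step 3 has degree $D_{m} := 2m\lfloor p/m\rfloor \leq 2p$, and is therefore integrable against the law of $\mathbf{\xi}_{i}$ by (\ref{mom}). The next step is to verify that the operator $TP(g) := \mathbb{E}[P(g\otimes\mathbf{\xi})]-P(g)$ still satisfies $d^{\circ}TP \leq d^{\circ}P - 2$ in the step-$N$ setting. The Campbell--Baker--Hausdorff formula expresses each component $(\log(g\otimes\mathbf{\xi}))^{m;I}$ as a polynomial of weighted degree $m$ in the combined variables $a = \log g$ and $b = \log\mathbf{\xi}$, so $P_{m,I}(g\otimes\mathbf{\xi}) - P_{m,I}(g)$ is a polynomial of total weighted degree $D_{m}$ in $(a,b)$ whose every monomial has strictly positive $b$-weight. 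Monomials of $b$-weight exactly $1$ must be linear in some $b^{1;i}$ (the only variables of weight $1$), so their expectation vanishes by the centering hypothesis $\mathbb{E}[\pi_{1}(\mathbf{\xi})] = 0$; the remaining monomials have $b$-weight $\geq 2$, hence $a$-weighted degree $\leq D_{m} - 2$. Iterating exactly as in Step 3 then yields
\[
\mathbb{E}[P_{m,I}(S_{k})] = (T+1)^{k}P_{m,I}(1) = \sum_{l=0}^{\lfloor D_{m}/2\rfloor}\binom{k}{l}T^{l}P_{m,I}(1) = O(k^{m\lfloor p/m\rfloor}).
\]

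The passage from the integer exponent $2\lfloor p/m\rfloor$ to the desired (possibly non-integer) exponent $2q/m$ is a one-line interpolation. For $q \leq m\lfloor p/m\rfloor$ one has $2q/m \leq 2\lfloor p/m\rfloor$, and H\"{o}lder's inequality gives
\[
\mathbb{E}[|a^{m;I}(S_{k})|^{2q/m}] \leq \bigl(\mathbb{E}[|a^{m;I}(S_{k})|^{2\lfloor p/m\rfloor}]\bigr)^{q/(m\lfloor p/m\rfloor)} = O(k^{q}).
\]
Summing over $(m,I)$ and combining with the homogeneous-norm bound delivers (\ref{PropOkp}) for every $q \leq q_{0}(p,N) = \min_{m=1,\ldots,N}m\lfloor p/m\rfloor$.

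The main obstacle, and the only genuinely new input beyond Step 3, is verifying the degree-reduction $d^{\circ}TP \leq d^{\circ}P - 2$ in the step-$N$ BCH setting --- more precisely, the observation that the only $b$-weight-$1$ monomials appearing in $P(g\otimes\mathbf{\xi}) - P(g)$ are linear combinations of the $b^{1;i}$, so that centering of the abelianization alone suffices to kill them. Everything else is a clean level-by-level iteration of the step-$2$ polynomial calculation followed by a standard $L^{s}$-in-$L^{t}$ bound, which is why the exponent we obtain is the arithmetic minimum $\min_{m} m\lfloor p/m\rfloor$ rather than $p$ itself.
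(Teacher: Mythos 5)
Your proof is correct and follows essentially the same route as the paper's: a level-by-level polynomial argument applying the degree-reducing operator $T$ to $(a^{m;I})^{2[p/m]}$ (homogeneous degree $2m[p/m]\leq 2p$, hence integrable), giving $O(k^{m[p/m]})$ for the level-$m$ moments, followed by an $L^{s}$-in-$L^{t}$ (Jensen/H\"{o}lder) step to pass from $q_{m}=m[p/m]$ down to $q_{0}=\min_{m}q_{m}$. The only difference is cosmetic: you work with individual components $P_{m,I}$ rather than the sum $P_{m}=\sum_{I}(a^{m;I})^{2[p/m]}$, and you spell out the Campbell--Baker--Hausdorff degree-reduction argument that the paper merely asserts by reference to Step 3 and the cited lemma.
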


\begin{proof}
Set $q_{m}=m\left[ p/m\right] $. The conclusion is equivalent to%
\begin{equation*}
\left\vert \left\Vert \mathbf{\xi }_{1}\otimes \dots \otimes \mathbf{\xi }%
_{k}\right\Vert \right\vert _{L^{2q}}=O\left( k^{1/2}\right)
\end{equation*}%
and will follow, with $q=\min \left\{ q_{m}:m=1,\dots ,N\right\} $ from%
\begin{eqnarray*}
&&\left\vert \left\vert \pi _{m}\left( \log \left( \mathbf{\xi }_{1}\otimes
\dots \otimes \mathbf{\xi }_{k}\right) \right) \right\vert ^{1/m}\right\vert
_{L^{2q}} \\
&\leq &\left\vert \left\vert \pi _{m}\left( \log \left( \mathbf{\xi }%
_{1}\otimes \dots \otimes \mathbf{\xi }_{k}\right) \right) \right\vert
^{1/m}\right\vert _{L^{2q_{m}}}=O\left( k^{1/2}\right)
\end{eqnarray*}%
provided we can show the $O\left( k^{1/2}\right) $-estimate of the last
line, for all $m\in \left\{ 1,\dots ,N\right\} $. To this end, consider $%
P_{m}\left( e^{a}\right) $ given by%
\begin{equation*}
\sum_{i_{1}\dots i_{m}\in \left\{ 1,\dots ,d\right\} }\left( a^{m;i_{1}\dots
i_{m}}\right) ^{2[p/m]}
\end{equation*}%
and observe that it has homogenous degree $2m\left[ p/m\right] \leq 2p$. Now
observe that the condition of existence of moments of order $2p$ for $%
\mathbf{\xi }$, i.e. (\ref{mom}), implies (using H\"{o}lder's inequality)
that the expectation of any polynomial in $\mathbf{\xi }$ of homogeneous
degree at most $2p$ is finite. Hence the operator $T$ defined in Step 3 of
the proof of Theorem \ref{Donskertype} applies to $P_{m}$ and the argument
there (see also \cite[Lemma 2.4]{MR507730} shows that it will reduce its
degree by two. The same arguments as earlier will then give us the $O\left(
k^{1/2}\right) $-estimate with $q_{m}$ determined by%
\begin{equation*}
2q_{m}/m=2\left[ p/m\right] \text{ or }q_{m}=m\left[ p/m\right] .
\end{equation*}
\end{proof}

\bigskip

As in the previous section, (\ref{PropOkp}) with $q=q_{0}$ implies $\alpha $%
-H\"{o}lder tightness/convergence for any $\alpha <\left( q_{0}-1\right)
/2q_{0}$. In particular, when 
\begin{equation*}
\frac{q_{0}-1}{2q_{0}}>\frac{1}{N+1}\text{ or~}q_{0}=\min_{m=1,\dots ,N}m%
\left[ \frac{p}{m}\right] >\frac{N+1}{N-1}=1+\frac{2}{N-1}
\end{equation*}
we can pick $\alpha >\frac{1}{N+1}$ and thus have ''rough path convergence''.%
\newline
\underline{Case $N=2$:} In this case, the above condition reduces to $\min
([p],2[p/2])>3$ which requires $p\geq 4$. Indeed, for $p=4$ we have $%
q_{0}\left( 4,N\right) =4$ which implies $\alpha $-H\"{o}lder
tightness/convergence for any 
\begin{equation*}
\alpha <\frac{q_{0}-1}{2q_{0}}=\frac{3}{8}
\end{equation*}
and since $3/8\in \left( 1/3,1/2\right) $ we can indeed pick $\alpha \in
\left( 1/3,1/2\right) $, such as to have tightness/convergence in rough path
topology. Remark that any $p<4$ implies $q_{0}\left( p,N\right) \leq 3$ so
that we necessarily have to pick 
\begin{equation*}
\alpha <\frac{3-1}{6}=\frac{1}{3}
\end{equation*}
in which case $\alpha $-H\"{o}lder topology is not a rough path topology on
the space of $G^{N}\left( \mathbb{R}^{d}\right) =G^{2}\left( \mathbb{R}%
^{d}\right) $-valued paths.\newline
\underline{Case $N=3$:} In this case, the above condition reduces to $\min
([p],2[p/2],3\left[ p/3\right] )>2$ which also requires $p\geq 4$. But by
assuming $p\geq 4$ we can safely work in the step-$2$ group. \newline
\underline{Case $N\geq 4$:} In this case, $\min ([p],2[p/2],3\left[ p/3%
\right] ,4\left[ p/4\right] ,\dots )>\left[ 1+\frac{2}{N-1}\right] =1$ so
that 
\begin{equation*}
m\left[ p/m\right] >1\text{ and hence }\geq 2
\end{equation*}
for all $m\in \left\{ 1,\dots ,N\right\} $. In particular we can take $m=4$
and see $4\left[ p/4\right] \geq 2\implies $ $\left[ p/4\right] \geq
1\implies p\geq 4$. Then again, we can make the remark that under the
assumption $p\geq 4$ we can safely work in the step-$2$ group.\newline

\begin{corollary}
Assume $\mathbb{E}\left( \left\vert \xi \right\vert ^{2p}\right) <\infty $
for some real $p\geq 4$. Then the rescaled (step-2) lift of the (rescaled,
piecewise linearly connected) random walk $W_{t}^{\left( n\right) }$
converges in $\alpha $-H\"{o}lder for any 
\begin{equation*}
\frac{1}{3}<\alpha <\frac{p^{\ast }-1}{2p^{\ast }}\equiv \alpha ^{\ast
}\left( p\right)
\end{equation*}%
where $p^{\ast }=\min ([p],2[p/2]).$
\end{corollary}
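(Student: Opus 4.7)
The plan is to deduce the statement from the two-step Donsker scheme developed in the proof of Theorem \ref{Donskertype}, combined with the moment estimate of the Proposition applied at $N = 2$. By Chen's identity the step-$2$ lift of $W^{(n)}$ admits the explicit form
\begin{equation*}
S_{2}(W_{t}^{(n)}) = \delta_{n^{-1/2}}\bigl( e^{\xi_{1}} \otimes \cdots \otimes e^{\xi_{[nt]}} \otimes e^{(nt-[nt])\xi_{[nt]+1}} \bigr),
\end{equation*}
so $S_{2}(W^{(n)})$ is precisely the rescaled, piecewise-geodesic random walk on $G^{2}(\mathbb{R}^{d})$ driven by the centered i.i.d.\ sequence $\mathbf{\xi}_{i} := e^{\xi_{i}}$. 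Since $\mathbb{E}(|\xi|^{2p}) < \infty$ with $p \geq 4$, each $\mathbf{\xi}_{i}$ has a finite $(2p)$-moment, and in particular a finite second moment.

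First, for the finite-dimensional distributions I would invoke the CLT on the simply connected nilpotent Lie group $G^{2}(\mathbb{R}^{d})$ quoted in Step~1 of the proof of Theorem \ref{Donskertype}, which identifies the law of $\delta_{n^{-1/2}}(\mathbf{\xi}_{1}\otimes\cdots\otimes\mathbf{\xi}_{n})$ in the limit with $\mathbf{B}_{1}$. Independence and stationarity of the increments of the prelimit, together with left-invariance of the Carnot--Carath\'eodory metric, extend this to joint convergence at any finite collection of times, with limit $\mathbf{B}$.

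Next, I would establish tightness in the $\alpha$-H\"{o}lder topology on $C([0,1], G^{2}(\mathbb{R}^{d}))$ via Kolmogorov's criterion in the Carnot--Carath\'eodory metric. Reducing exactly as in Step~2 of the proof of Theorem \ref{Donskertype} (using dyadic endpoints, independence, left-invariance of $d$, and control of the geodesic interpolation on the intermediate intervals), the problem reduces to showing
\begin{equation*}
\mathbb{E}\bigl[\|\mathbf{\xi}_{1}\otimes\cdots\otimes\mathbf{\xi}_{k}\|^{a}\bigr] = O(k^{a/2})
\end{equation*}
uniformly in $k\leq n$, for some exponent $a$ with $(a/2-1)/a > 1/3$. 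Applying the Proposition with $N=2$ furnishes such a bound for $a = 2p^{\ast}$, where $p^{\ast} = q_{0}(p,2) = \min([p], 2[p/2])$, giving $b/a = (p^{\ast}-1)/(2p^{\ast})$. The hypothesis $p \geq 4$ gives $p^{\ast} \geq 4$ and hence $(p^{\ast}-1)/(2p^{\ast}) \geq 3/8 > 1/3$, producing tightness in $C^{\alpha\text{-H\"{o}l}}$ for every $\alpha \in (1/3, (p^{\ast}-1)/(2p^{\ast}))$. Together with the first step this yields the claimed weak convergence; the limit almost surely belongs to $C^{0,\alpha\text{-H\"{o}l}}$ since it is the a.s.\ limit of the Lipschitz lifts $S_{2}(W^{(n)})$.

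The main obstacle lies in the moment bound of the Proposition itself, and specifically in understanding why the effective integrability $p$ must be coarsened to $p^{\ast}$ at step $N=2$. The argument exploits the fact that the ``increment operator'' $TP(g) := \mathbb{E}[P(g\otimes\mathbf{\xi})] - P(g)$ reduces the homogeneous degree of any polynomial $P$ by two whenever $\mathbf{\xi}$ is centered, and then iterates $(T+1)^{k}$ so that only a finite number of terms survive, each of size $O(k^{\deg P/2})$. Because we can only feed $T$ polynomials of \emph{integer} homogeneous degree not exceeding $2p$, the best admissible test polynomial capturing the component of $\log(\mathbf{\xi}_{1}\otimes\cdots\otimes\mathbf{\xi}_{k})$ at level $m\in\{1,2\}$ is $\sum (a^{m;\cdot})^{2[p/m]}$, and optimizing over $m$ yields the integer $p^{\ast} = \min([p],2[p/2])$. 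This integer-degree constraint is precisely what forces the gap between the Lamperti-type exponent $(p-1)/(2p)$ and the exponent $\alpha^{\ast}(p)$ obtained here, and is the reason the threshold $p \geq 4$ is needed in order to reach $\alpha > 1/3$.
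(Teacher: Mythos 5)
Your proposal is correct and follows essentially the same route as the paper: Chen's identity to reduce to the group-valued walk, the nilpotent CLT for the finite-dimensional distributions, and Kolmogorov tightness fed by the Proposition's moment bound at $N=2$ with $q_{0}(p,2)=p^{\ast}=\min([p],2[p/2])$, together with the observation that $p\geq 4$ forces $p^{\ast}\geq 4$ and hence $(p^{\ast}-1)/(2p^{\ast})\geq 3/8>1/3$. Your closing explanation of why the integer-degree constraint on the test polynomials coarsens $p$ to $p^{\ast}$ matches the paper's own account of the phenomenon.
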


\begin{remark}
For $p\in \left\{ 4,6,8,\dots \right\} $, $\alpha ^{*}\left( p\right) =\frac{%
p-1}{2p}$. In particular, $\alpha ^{*}\left( 4\right) =3/8$ and 
\begin{equation*}
\alpha ^{*}\left( p\right) \sim \frac{p-1}{2p}\rightarrow 1/2\text{ as }%
p\rightarrow \infty
\end{equation*}
in agreement with Theorem \ref{Donskertype}.
\end{remark}

\bibliographystyle{plain}
\bibliography{roughpaths}

\end{document}